\def\CB{\color{black} }
\let\originalleft\left
\let\originalright\right
\renewcommand{\left}{\mathopen{}\mathclose\bgroup\originalleft}
\renewcommand{\right}{\aftergroup\egroup\originalright}
\title[Generalized Hilbert matrix operators on Bergman spaces]{Generalized Hilbert matrix operators acting on Bergman spaces}
\begin{document}

\author[ ]{C. Bellavita$^1$ }

\author[ ]{V. Daskalogiannis$^{2, 3}$}

\author[ ]{S. Miihkinen$^4$}

\author[ ]{D. Norrbo$^4$}

\author[ ]{G. Stylogiannis$^2$}

\author[ ]{J. Virtanen$^4$}

\email{\newline carlobellavita@ub.edu \\  vdaskalo@math.auth.gr \\  s.j.miihkinen@reading.ac.uk    \newline d.norrbo@reading.ac.uk\\ stylog@math.auth.gr\\  j.a.virtanen@reading.ac.uk }

\address{$^1$Departament of Matem\'atica i Inform\'atica, Universitat de Barcelona, Gran Via 585, 08007 Barcelona, Spain.}
\address{$^2$Department of Mathematics, Aristotle University of Thessaloniki, 54124, Thessaloniki, Greece.}
\address{$^3$Division of Science and Technology, American College of Thessaloniki, 17 V. Sevenidi St., 55535, Pylea, Greece.}
\address{$^4$Department of Mathematics and Statistics, School of Mathematical and Physical Sciences, University of Reading, Whiteknights, PO Box 220, Reading RG6 6AX, UK.}

\thanks{}

\keywords{Generalized Hilbert matrix operator, Hausdorff matrices, Bergman spaces, Ces\`aro operator}

\subjclass{30H20, 47B91} 


\begin{abstract}
    In this article we study the generalized Hilbert matrix operator $\Gamma_\mu$ acting on the Bergman spaces $A^p$ of the unit disc for $1\leq p<\infty$. In particular, we characterize the measures $\mu$ for which the operator $\Gamma_\mu$ is bounded and  we provide estimates of its operator norm. Finally, we also describe when $\Gamma_\mu$ is compact by computing  its essential norm.
\end{abstract}

\maketitle

\section{Introduction}
\noindent
Let $\mu$ be a finite, probability  Borel measure  on $[0,1)$.
We consider the infinite matrix
$$
\tilde{\Gamma}_{\mu}=\left(%
\begin{array}{ccccc}
            \gamma_{00} & \gamma_{01}  & \gamma_{02}  & \dots  \\
\gamma_{10} & \gamma_{11}  & \gamma_{12}  & \dots  \\
  \gamma_{20} & \gamma_{21}  & \gamma_{22}  & \dots \\
  \vdots  & \vdots & \vdots & \ddots  \\
\end{array}%
\right)
$$
with entries
$$
\gamma_{n k}=\binom{n+k}{k}\int_0^1t^k(1-t)^n\,d\mu(t). 
$$
The matrix $\tilde{\Gamma}_\mu$ is related to the classical Hausdorff matrix $\mathcal{K}_\mu$ induced by the moment sequence $\{\mu_n\}$, that is, for $n=0,1, \dots$,
\[
\mu_n=\int_0^1t^n\,d\mu(t).
\]
 Indeed
 \begin{equation}
\mathcal{K}_{\mu}= \left(
\begin{matrix}
c_{00}& 0 & 0&  \dots     \\
c_{10}& c_{11} &0& \dots     \\
c_{20} & c_{21} & c_{22} & \dots    \\
 \vdots  & \vdots & \vdots & \ddots  \\
\end{matrix}
\right) 
\notag
\end{equation}
with entries $c_{nk}$  given by
$$
c_{n k}=\binom{n}{k}\int_0^1t^k(1-t)^{n-k}\,d\mu(t),\;0\leq k\leq n.
$$
The matrix $\tilde{\Gamma}_\mu$ is obtained by shifting the entries of the $k$-th column of $\mathcal{K}_\mu$, $k$-places up.
More precisely, with respect to the standard basis $\{e_n\}_{n\geq 0}$, we note that $\tilde{\Gamma}_\mu$ is related to $\mathcal{K}_\mu$ through the algebraic relation 
\[
\tilde{\Gamma}_\mu(e_n)={S^*}^n\circ \mathcal{K}_\mu(e_n),
\]
where $S^*(e_0)=0$ and $S^{*}(e_n)=e_{n-1}$, for $n\geq 1$.
The Hausdorff matrices have been studied on spaces of analytic functions, see for example
 \cite{Galanopoulos2001}, \cite{Galanopoulos2006} and \cite{LIFLYAND2007}. 
 
\vspace{11 pt}\noindent 
We will study the action of the matrix  $\tilde{\Gamma}_\mu$ as an operator on spaces of analytic functions. In \cite{Bellavita2023}, the authors studied the action of $\tilde{\Gamma}_\mu$ in the Hardy spaces $H^p$ for $1\leq p<\infty$ and they characterized the measures $\mu$ for which the operator $\tilde{\Gamma}_\mu$ is bounded. The matrix $\tilde{\Gamma}_\mu$ acts  on the sequence of the Taylor coefficients of the analytic function $f(z)=\sum_{k=0}^{\infty} a_kz^k$ as follows:
\begin{equation*}
\tilde{\Gamma}_\mu (f)(z) = \sum_{n= 0}^{\infty}\left(\sum_{k= 0}^{\infty}a_k\, \binom{n+k}{k}\int_0^1
t^k(1-t)^n \,d\mu(t)\right)\,z^n.
\end{equation*}
We prove that $\tilde{\Gamma}_\mu (f)(z)$ has an equivalent integral representation in $A^p$ for $1\leq p<\infty$, i.e.
\begin{equation}\label{E:integral representation}
    \Gamma_{\mu}(f)(z) =\int_{0}^{1}
f(\varphi_{t}(z))w_{t}(z) \,d\mu(t) =
\int_{0}^{1}T_{t}(f) \,d\mu(t),
\end{equation}
where $T_t (f)=w_t\cdot  f\circ \varphi_t$ is a weighted composition operator with
$$
\varphi_{t}(z) =\frac{t}{1+(t-1)z} \quad
\text{and}\quad w_{t}(z) =\frac{1}{1 + (t-1)z}.
$$
We note that 
$$ 
\varphi_{t}(\mathbb{D})=D\left( \frac{1}{2-t},\;\frac{1-t}{2-t}\right),
$$
is the open disc centered at $1/(2-t)$ with radius $(1-t)/(2-t)$. In particular, for every $0<t<1\,$, $\,\varphi_{t}(\mathbb{D})\subset \mathbb{D}$ with $\overline{\varphi_{t}(\mathbb{D})}\cap \partial \mathbb D=\{1\}$.

\vspace{11 pt}\noindent 
If $\mu$ is the Lebesgue measure, the matrix $\tilde{\Gamma}_\mu$ becomes the classical Hilbert matrix
\[
H=\left(
\begin{array}{ccccc}
            1 & \frac{1}{2}  & \frac{1}{3}  & \dots  \\ [4pt]
  \frac{1}{2} & \frac{1}{3}  & \frac{1}{4}  & \dots  \\ [4pt]
  \frac{1}{3} & \frac{1}{4}  & \frac{1}{5}  & \dots \\
  \vdots  & \vdots & \vdots & \ddots  \\
\end{array}
\right).
\]
\noindent 
The action of this operator has been widely studied in the Bergman spaces. 
In \cite{Diamantopoulos2004}, using the integral representation \eqref{E:integral representation}, Diamantopoulos showed that $H$ is bounded on $A^p$ if and only if $p>2$, providing a sharp upper bound when $p\geq 4$. A sharp lower bound for $\norm{H}_{A^p}$ was determined in \cite{Dostanic2008} for every $p>2$,  by considering appropriate test functions, hence the exact value of the norm for $p\geq 4$ was computed. In \cite{Bozin2018}, the authors estimated a sharp upper bound for the norm of $H$ on $A^p$, for $2<p<4$, applying some new estimates for the Beta function. In particular, for $p>2$, 
\begin{equation}\label{E:norm hilbert matrix}
\|H\|_{A^p}=\dfrac{\pi}{\sin\left(\frac{2\pi}{p}\right)}=\int_0^1 \frac{t^{2/p-1}}{(1-t)^{2/p}} \, dt .
\end{equation}
See also \cite{Lindstrom2021} where a simplified proof of \eqref{E:norm hilbert matrix} appears.

\vspace{11 pt}\noindent 
There are still several open questions related to the action of the classical Hilbert matrix operator on spaces of analytic functions. Although we know that $H$ is bounded in the standard weighted Bergman spaces $A^p_\alpha$ if and only if $ 1<\alpha+2<p$ \cite{Karapetrovic2018}, the exact value of $\|H\|_{A^p_\alpha}$ is known only for
\begin{equation*}
p\geq \frac{3\alpha}{4}+2+\sqrt{\left(\frac{3\alpha}{4}+2\right)^2-\frac{\alpha + 2}{2} } 
\end{equation*}
when $\alpha > 0$, see \cite{Dmitrovi2023}. We mention \cite{Bellavitasurvey} as a brief review of the latest updates on the study of the Hilbert matrix operator acting on different spaces of analytic functions and on sequence spaces.

\vspace{11 pt}
\noindent 
In this article, we focus on necessary and sufficient conditions for the continuity of the operators $\Gamma_\mu$ on $A^p$ with $1\leq p<\infty$.
In order to formulate our main result it will be convenient to introduce the following function:

\begin{small}
\[ 
\Theta_p(t):= 
\begin{cases}
    \dfrac{t^{\frac{2}{p}-1}}{(1-t)^{\frac{2}{p}}},\,\,\,\, &\text{if} \,\, 2<p<\infty;
    \\[0.2in]
 \ \sqrt{\int_0^1 \mu[0,t]\log\left(\frac{e}{t}\right)\,d\mu(t)+\left(\int_0^1\frac{1}{1-t}\,d\mu(t)\right)^{2}},   \,\,\,\, &\text{if} \,\, p=2;
    \\[0.2in]
   \dfrac{1}{(1-t)^{2/p}},\, \,\,\,&\text{if}\,\, 1\leq p<2 .
\end{cases} 
\]
\end{small} 
\noindent 
We use the following convention:
\[
 \mu[a,b]:= \int_a^b \,d\mu(s) 
:=  \int_{[a,b)} \, d\mu(s), \quad  0\leq a<b\leq 1. 
\]
\begin{thm}\label{main theom boundedness}
The operator $\Gamma_\mu$ is bounded on  $A^p, \;1\leq p<\infty$,  if and only if
\begin{equation*}
    \int_{0}^{1}\Theta_p(t)\,d\mu(t)<\infty.
\end{equation*}
In particular, there exist positive constants $A(p), B(p)$ depending only on $p$ such that 
\begin{equation}\label{E:extra}
    A(p)\int_0^1 \Theta_p(t)\,d\mu(t)\, \leq \,\norm{\Gamma_\mu}_{A^p \to A^p}\,\leq\, B(p) \int_{0}^{1} \Theta_p(t)\,d\mu(t). 
\end{equation}
When $p>2$ the constant $A(p)$ can be chosen equal to $1$ and when $p\geq 4$ the constant $B(p)$ can also be chosen equal to $1$.  Hence, when $p\geq4$
\[
\|\Gamma_\mu\|_{A^p\to A^p}=\int_{0}^{1}\dfrac{t^{\frac{2}{p}-1}}{(1-t)^{\frac{2}{p}}}\,d\mu(t).
\]
\end{thm}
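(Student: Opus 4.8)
The plan is to treat the three regimes $p>2$, $p=2$, and $1\le p<2$ separately, in each case establishing the upper and lower bounds of \eqref{E:extra} via the integral representation \eqref{E:integral representation}. The unifying tool is the triangle inequality in $A^p$ applied to $\Gamma_\mu(f)=\int_0^1 T_t(f)\,d\mu(t)$, which immediately yields
\[
\|\Gamma_\mu(f)\|_{A^p}\le \int_0^1 \|T_t(f)\|_{A^p}\,d\mu(t),
\]
so the upper bound reduces to computing (or sharply estimating) the norm of the single weighted composition operator $T_t(f)=w_t\cdot(f\circ\varphi_t)$ on $A^p$. Since $\varphi_t$ is a disc automorphism-like Möbius map carrying $\mathbb D$ into the disc $D(\tfrac1{2-t},\tfrac{1-t}{2-t})$, a change of variables $z\mapsto\varphi_t(z)$ in $\int_{\mathbb D}|w_t(z)|^p|f(\varphi_t(z))|^p\,dA(z)$ turns the problem into estimating $\int_{\varphi_t(\mathbb D)}|f(w)|^p\,d\nu_t(w)$ for an explicit weight $d\nu_t$; one then bounds this by $\|f\|_{A^p}^p$ times the appropriate power of $(1-t)$, recovering $\Theta_p(t)$ in the regimes $p\ne 2$. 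For $p\ge 4$ one should be able to get the constant $B(p)=1$ exactly, mirroring the Hilbert-matrix computation of \cite{Diamantopoulos2004, Dostanic2008} and the simplified argument of \cite{Lindstrom2021}; for $2<p<4$ one borrows the Beta-function estimates of \cite{Bozin2018} to get a finite $B(p)$.

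For the lower bounds the strategy is to test $\Gamma_\mu$ on a suitable family of functions and extract the integral $\int_0^1\Theta_p(t)\,d\mu(t)$. When $p>2$, the natural choice is the normalized reproducing-kernel-type test functions $f_a(z)=(1-|a|^2)^{\beta}/(1-\bar a z)^{\gamma}$ with $a\to 1^-$ along the real axis, for which $\langle \Gamma_\mu f_a, f_a\rangle$ or $\|\Gamma_\mu f_a\|_{A^p}$ concentrates the mass of $\mu$ near $1$ and produces $\int_0^1 t^{2/p-1}(1-t)^{-2/p}\,d\mu(t)$ in the limit, giving $A(p)=1$; here one must be careful that $\Theta_p$ may have non-integrable behaviour near $t=1$ precisely when $\Gamma_\mu$ is unbounded, so the test-function computation simultaneously proves necessity. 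For $1\le p<2$, because $\varphi_t(\mathbb D)$ stays well inside $\mathbb D$ away from the single boundary point $1$, the relevant obstruction is the size of the weight $w_t$ near $z=1$; testing on functions peaked near $1$ (or on a fixed good function together with a localization argument) should yield the lower bound with the weight $(1-t)^{-2/p}$, and the matching upper bound follows from the change-of-variables estimate above.

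The genuinely delicate case is $p=2$, where the weighted composition operator $T_t$ fails to be uniformly bounded and the naive estimate $\int_0^1\|T_t\|_{A^2}\,d\mu(t)$ diverges; this is why $\Theta_2$ has the logarithmic correction and the quadratic term $\left(\int_0^1\frac1{1-t}\,d\mu(t)\right)^2$. Here I expect to compute $\|\Gamma_\mu f\|_{A^2}^2=\langle\Gamma_\mu^*\Gamma_\mu f,f\rangle$ directly on the monomial basis: using $\Gamma_\mu e_k=\sum_n\gamma_{nk}e_n$ and the Bergman norms $\|e_n\|_{A^2}^2=1/(n+1)$, one is led to a bilinear form in $\mu$ whose kernel, after summing the hypergeometric-type series in $n$, produces both the $\iint \min(s,t)$-style term (which is where $\mu[0,t]\log(e/t)$ comes from, via $\sum_n \frac{(st)^?}{n+1}$ or an analogous $-\log$ identity) and the rank-one $\left(\int\frac1{1-t}d\mu\right)^2$ piece coming from the $n=0$ row. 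The main obstacle throughout is thus the $p=2$ endpoint: getting the exact form of $\Theta_2$ requires an honest summation of the relevant series and a careful separation of the "diagonal" Carleson-type contribution from the rank-one contribution, and proving the lower bound there will need test functions adapted to both pieces simultaneously (e.g.\ a combination of a constant-like function detecting $\int\frac1{1-t}d\mu$ and a slowly-growing function detecting the logarithmic term).
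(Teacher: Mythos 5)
For $p\neq 2$ your outline coincides with the paper's argument: the upper bound is Minkowski's integral inequality applied to $\Gamma_\mu(f)=\int_0^1 T_t(f)\,d\mu(t)$ together with a norm estimate for the single operator $T_t$ obtained by the change of variables $w=\varphi_t(z)$, and the lower bound comes from test functions concentrating at $z=1$. Two points you gloss over are real but fixable. First, for $1\le p<2$ the raw change of variables produces the weight $|w|^{p-4}$ on $\varphi_t(\mathbb D)$, whose infimum of $|w|$ is only $t/(2-t)$, so the naive bound degenerates like $t^{-2/p}$ as $t\to 0$; the paper removes this spurious singularity by writing $f=f(0)+(f-f(0))$ (and for $p=1$ also subtracting the linear term), treating the constant part by Forelli--Rudin and splitting the remaining integral over $\{|w|\le 1/2\}$ and $\{|w|\ge 1/2\}$. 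Second, for $1\le p<2$ the peaked test functions only yield $\int_0^1 t^{2/p-1}(1-t)^{-2/p}\,d\mu(t)\le\|\Gamma_\mu\|_{A^p}$, which vanishes relative to $\Theta_p$ near $t=0$; one must add the trivial bound $\mu[0,1/2]\le 1=\Gamma_\mu(1)(0)\le\|\Gamma_\mu\|_{A^p}$ to recover $\int_0^1(1-t)^{-2/p}\,d\mu(t)$. For $p>2$ the paper gets the sharp constant $A(p)=1$ by using the fixed-singularity family $f_a(z)=(1-z)^{-a}$, $a\to 2/p$, for which $\Gamma_\mu(f_a)=\Lambda_a\cdot f_a$ exactly with an explicit multiplier $\Lambda_a$; your moving reproducing kernels would need an analogous exact computation to avoid losing a constant.

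The genuine gap is at $p=2$, which you rightly single out as the delicate case but for which your plan would not go through as described. Diagonalizing $\Gamma_\mu^*\Gamma_\mu$ on the monomial basis requires evaluating $\sum_n \gamma_{nj}\gamma_{nk}/(n+1)$ for all $j,k$, a two-parameter hypergeometric sum that does not collapse to anything manageable, and you have also reversed the provenance of the two terms in $\Theta_2$: the logarithmic term is precisely $\|\Gamma_\mu(1)\|_{A^2}^2$, i.e.\ it is produced by the \emph{constant} component of $f$ (detected by testing on $f\equiv 1$), while $\bigl(\int_0^1(1-t)^{-1}\,d\mu(t)\bigr)^2$ controls the action on $A^2_0=\{f:f(0)=0\}$ and is detected by the singular functions $f_a$. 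The missing idea is the decomposition $\Gamma_\mu(f)=f(0)\,\Gamma_\mu(1)+\Gamma_\mu(f-f(0))$ combined with two observations: (i) one can compute exactly, using $\|g\|_{A^2}^2=\sum_n|\hat g(n)|^2/(n+1)$, that
\begin{equation*}
\|\Gamma_\mu(1)\|_{A^2}^2=\sum_{n\ge 0}\frac{1}{n+1}\left(\int_0^1(1-t)^n\,d\mu(t)\right)^{2}\sim\int_0^1\mu[0,t]\log\frac{e}{t}\,d\mu(t),
\end{equation*}
and (ii) the logarithmic loss in $\|T_t\|_{A^2}\lesssim\sqrt{\log(e/t)}/(1-t)$ comes entirely from the constant term of the input, so on $A^2_0$ one has $\|T_t(f-f(0))\|_{A^2}\le\|S^*\|_{A^2}(1-t)^{-1}\|f\|_{A^2}$ and Minkowski's inequality applies there without any logarithm. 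Without (ii), any route through $\int_0^1\|T_t\|_{A^2}\,d\mu(t)$ yields only the sufficient condition $\int_0^1\sqrt{\log(e/t)}\,(1-t)^{-1}\,d\mu(t)<\infty$, which the paper shows (Proposition \ref{Prop_counter}) is strictly stronger than boundedness; so the decomposition is not optional but is the crux of the $p=2$ case.
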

\noindent
The expression of $\Theta_p(t)$ is different when $p=2$ and in Proposition \ref{Prop_counter} we show that the natural condition coming from Lemma \ref{L:estimates T_t}, that is
\begin{equation}
   \int_0^1 \dfrac{\sqrt{\log(e/t)}}{1-t}\,d\mu(t)<\infty,
    \end{equation}
    is not necessary for the boundedness of $\Gamma_\mu$ on $A^2$.

\vspace{11 pt}
\noindent
 Notice that, even when $2<p<4$, the lower bound of the norm is the optimal bound for which the inequality \eqref{E:extra} is valid for all measures $\mu$. Indeed the lower bound in \eqref{E:extra} corresponds to the precise value of the norm given in \eqref{E:norm hilbert matrix}.
On the other hand, from \cite[Corollary 3.2]{Liu2015}, we know that when $p\to 2^+$
$$
\sup_{a \in \mathbb D}\|\Gamma_{\delta_a}(1)\|_{A^p}\Big/ \int_{0}^1\Theta_p(t)  \, d\delta_a(t)>1,
$$
where $\delta_a$ is a Dirac point measure at $a \in (0,1)$.
This implies that for specific measures and particular values of $p$, the constant $A(p)$ could be chosen bigger than $1$.

\vspace{11 pt}
\noindent 
Furthermore, we consider compactness and complete continuity.
We recall that an operator $T$ on a Banach space $X$ is compact if, for any bounded sequence $\{x_n\}$ in X, the sequence
$\{T(x_n)\}$ contains a convergent subsequence.
Moreover, an operator $T$ is completely continuous on $X$ if for any  weakly convergent sequence $\{x_n\}$ in $X$, the sequence
$\{T(x_n)\}$ converges in norm. In general, every compact operator is completely continuous, however the converse could be false when $X$ is non-reflexive.

\vspace{11 pt}
\noindent 
In order to prove that an operator $T$ is non-compact, it is enough proving that its essential norm $\|T\|_{e,X}$ is non-zero.
We recall that
$$
\| T\|_{e,X}=\inf \left\lbrace \|T-K\|_X \text{ where } K \text{ is a compact operator in } X\right\rbrace. 
$$
 It is  clear that $\|T\|_{X} \geq \|T\|_{e,X}$.

\begin{thm}\label{Compact p>1}
Let $\Gamma_{\mu}$ be bounded in $A^p$, $1<p<\infty$. Then,

\[
\|\Gamma_\mu\|_{e,A^p}=\int_{0}^1\frac{t^{2/p-1}}{(1-t)^{2/p}}\,d\mu(t).
\]
\end{thm}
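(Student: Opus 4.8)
I would establish the two inequalities $\|\Gamma_\mu\|_{e,A^p}\ge I_p(\mu)$ and $\|\Gamma_\mu\|_{e,A^p}\le I_p(\mu)$ separately, where I write $I_p(\mu):=\int_0^1 t^{2/p-1}(1-t)^{-2/p}\,d\mu(t)$; this is finite whenever $\Gamma_\mu$ is bounded, since $I_p(\mu)=\int_0^1\Theta_p\,d\mu<\infty$ when $p>2$, and $I_p(\mu)\le\int_0^1(1-t)^{-2/p}\,d\mu<\infty$ when $1<p\le 2$ (for $p=2$ the factor $(1-t)^{-1}$ is controlled by the $p=2$ criterion of Theorem \ref{main theom boundedness}). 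For the lower bound I would use the elementary fact that $\|\Gamma_\mu\|_{e,A^p}\ge\limsup_k\|\Gamma_\mu h_k\|_{A^p}$ for every norm-one sequence $\{h_k\}\subset A^p$ with $h_k\to 0$ weakly; this is where $1<p<\infty$ enters, through reflexivity of $A^p$ and the fact that compact operators send weakly null sequences to norm null ones. For the upper bound I would approximate $\Gamma_\mu$ by $\Gamma_\mu V_\rho$, where $V_\rho f(z):=f(\rho z)$; each $V_\rho$ ($0<\rho<1$) is compact on $A^p$ because it maps bounded sets into families of functions analytic on $\tfrac1\rho\mathbb D$ and uniformly bounded on $\overline{\mathbb D}$, hence normal.

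\textbf{The lower bound.} I would take the test functions $f_s(z):=(1-z)^{-s}\in A^p$ with $0<s<2/p$ and let $s\to(2/p)^-$. Feeding $f_s$ into the integral representation \eqref{E:integral representation} and using $1-\varphi_t(z)=\frac{(1-t)(1-z)}{1+(t-1)z}$ gives the exact factorisation $\Gamma_\mu f_s=f_s\cdot\psi_s$, with $\psi_s(z)=\int_0^1\frac{(1+(t-1)z)^{s-1}}{(1-t)^s}\,d\mu(t)$. The key feature is that $\psi_s$ is continuous at $z=1$ with $\psi_s(1)=\int_0^1\frac{t^{s-1}}{(1-t)^s}\,d\mu(t)$, so $\psi_s(1)\to I_p(\mu)$ as $s\to(2/p)^-$. (When $p\ge 2$ the integral $\psi_s(1)$ may diverge because of mass of $\mu$ near $t=0$; I would first replace $\mu$ by $\mu|_{(\varepsilon,1)}$, for which $t^{s-1}$ is uniformly bounded, write $\Gamma_\mu f_s=f_s\psi_s^{(\varepsilon)}+\Gamma_{\mu|_{[0,\varepsilon)}}f_s$, and afterwards let $\varepsilon\to 0$, absorbing the error $\|\Gamma_{\mu|_{[0,\varepsilon)}}\|_{A^p}\to 0$ coming from Theorem \ref{main theom boundedness}.) Since $\|f_s\|_{A^p}\to\infty$ while $f_s\to(1-z)^{-2/p}$ locally uniformly, $f_s/\|f_s\|_{A^p}\to 0$ weakly; moreover the $A^p$-mass of $f_s$ concentrates at $z=1$, i.e. $\int_{|1-z|<\delta}|f_s|^p\,dA\big/\|f_s\|_{A^p}^p\to 1$ for every fixed $\delta>0$. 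Combining this with $\|\Gamma_\mu f_s\|_{A^p}=\|\psi_s f_s\|_{A^p}$ and the continuity of $\psi_s$ at $1$ gives $\liminf_{s\to(2/p)^-}\|\Gamma_\mu f_s\|_{A^p}/\|f_s\|_{A^p}\ge I_p(\mu)$, hence $\|\Gamma_\mu\|_{e,A^p}\ge I_p(\mu)$. (For $p>2$ this is, modulo the remark on weak nullity, the lower bound already contained in the proof of Theorem \ref{main theom boundedness}, since there $\Theta_p(t)=t^{2/p-1}(1-t)^{-2/p}$.)

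\textbf{The upper bound.} Compactness of $\Gamma_\mu V_\rho$ gives $\|\Gamma_\mu\|_{e,A^p}\le\|\Gamma_\mu(I-V_\rho)\|_{A^p}$, and \eqref{E:integral representation} with Minkowski's integral inequality gives $\|\Gamma_\mu(I-V_\rho)\|_{A^p\to A^p}\le\int_0^1\|T_t(I-V_\rho)\|_{A^p\to A^p}\,d\mu(t)$. The decisive step is the single-operator statement $\limsup_{\rho\to 1^-}\|T_t(I-V_\rho)\|_{A^p}\le t^{2/p-1}(1-t)^{-2/p}$, i.e. the computation of the essential norm of the weighted composition operator $T_t=w_t\,C_{\varphi_t}$: its symbol $\varphi_t$ is a linear fractional self-map of $\mathbb D$ with a single boundary contact $\varphi_t(1)=1$, where $\varphi_t'(1)=(1-t)/t$, while $w_t(1)=1/t$. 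I would prove this by the substitution $w=\varphi_t(z)$, which turns $\|T_t g\|_{A^p}^p$ into $\int_{\varphi_t(\mathbb D)}h_t(w)\,|g(w)|^p\,dA(w)$ with $h_t(w):=|w_t(\varphi_t^{-1}(w))|^p\,|(\varphi_t^{-1})'(w)|^2$ continuous on $\overline{\varphi_t(\mathbb D)}$ (as $\varphi_t^{-1}$ has no pole there); taking $g=(I-V_\rho)f$, which converges to $0$ uniformly on compact subsets of $\mathbb D$ while pushing its $A^p$-mass to $\partial\mathbb D$, and using $\overline{\varphi_t(\mathbb D)}\cap\partial\mathbb D=\{1\}$, only the value $h_t(1)^{1/p}=|w_t(1)|\,|(\varphi_t^{-1})'(1)|^{2/p}=\tfrac1t\bigl(\tfrac{t}{1-t}\bigr)^{2/p}=t^{2/p-1}(1-t)^{-2/p}$ survives in the limit. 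Since, by Lemma \ref{L:estimates T_t}, $\|T_t(I-V_\rho)\|_{A^p}$ is dominated uniformly in $\rho$ by a $\mu$-integrable function of $t$ — using $\int_0^1\Theta_p\,d\mu<\infty$, and, when $p=2$, first restricting $\mu$ away from $t=0$ so that the logarithmic factor in the $p=2$ estimate becomes harmless — the reverse Fatou lemma gives $\|\Gamma_\mu\|_{e,A^p}\le\limsup_{\rho\to 1^-}\int_0^1\|T_t(I-V_\rho)\|_{A^p}\,d\mu(t)\le I_p(\mu)$. (When $p\ge 4$ the upper bound also follows at once from $\|\Gamma_\mu\|_{e,A^p}\le\|\Gamma_\mu\|_{A^p}=I_p(\mu)$ of Theorem \ref{main theom boundedness}.)

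\textbf{Main obstacle.} I expect the hard part to be this sharp single-operator essential norm: the crude bound $\|(I-V_\rho)f\|_{A^p}\le(1+\rho^{-2/p})\|f\|_{A^p}$ only produces an upper estimate off by a $p$-dependent factor, so one really has to show that the dilated symbols $\rho\varphi_t$ furnish \emph{asymptotically optimal} compact approximations of $T_t$ — this is precisely where the linear fractional structure and the single boundary contact point are exploited (alternatively, one may invoke known essential-norm formulas for linear fractional weighted composition operators on Bergman spaces). The subsidiary points — the uniform-in-$s$ control of $\psi_s$ near $z=1$, the interchange of $\lim_\rho$ and $\int d\mu$, and the truncation of $\mu$ near $t=0$ when $p\ge 2$ — are routine once the estimates of Lemma \ref{L:estimates T_t} and Theorem \ref{main theom boundedness} are available.
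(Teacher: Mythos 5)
Your overall architecture mirrors the paper's: the lower bound via the normalized test functions $f_s=(1-z)^{-s}$, their weak nullity and the concentration of their $A^p$-mass at $z=1$ is exactly Lemma \ref{lem-fond} combined with \eqref{Essential norm below}, and your upper bound strategy (compact approximants, Minkowski's integral inequality, change of variables $w=\varphi_t(z)$, and splitting $\varphi_t(\mathbb D)$ into $\overline{D(0,R)}$ and its complement so that only the boundary value of the Jacobian density at the contact point $w=1$ survives) is the same splitting the paper performs. The $p=2$ truncation of $\mu$ near $t=0$ and the reverse Fatou/domination step are fine.

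However, there is a genuine gap at what you yourself call the decisive step. Your upper bound rests on the single-operator claim $\limsup_{\rho\to1^-}\|T_t(I-V_\rho)\|_{A^p}\le t^{2/p-1}(1-t)^{-2/p}$, but the argument you sketch does not prove it: after splitting the integral, the contribution from $D_{R,t}^c$ is controlled by $\sup_{D_{R,t}^c}h_t(w)^{1/p}\cdot\|(I-V_\rho)f\|_{A^p}$, so what you actually obtain is $h_t(1)^{1/p}\cdot\limsup_{\rho\to1}\|I-V_\rho\|_{A^p\to A^p}$. You only know $\|I-V_\rho\|_{A^p}\le 1+\rho^{-2/p}$, which leaves a spurious factor tending to $2$, and for $p\ne2$ there is no reason a \emph{single} dilation should satisfy $\limsup_\rho\|I-V_\rho\|_{A^p}\le1$ (for $p=2$ it does, by the coefficient formula \eqref{norm bergman series}). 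You flag this as the "main obstacle" but do not resolve it, so the proof as written only yields $\|\Gamma_\mu\|_{e,A^p}\le 2\int_0^1 t^{2/p-1}(1-t)^{-2/p}\,d\mu(t)$ for general $p\in(1,\infty)$. The paper closes precisely this hole by invoking Lemma \ref{aprox Lemm} (Lemma 3.2 of \cite{Lindstrom2022}): there exist \emph{compact} operators $L_n$ — convex combinations of dilations rather than a single one — with $\limsup_n\|I-L_n\|_{A^p}\le1$ and $\sup_{\|f\|_{A^p}=1}\sup_{|z|\le R}|(I-L_n)f(z)|\to0$. Once you replace $V_\rho$ by these $L_n$, your own splitting argument goes through verbatim and produces the sharp constant; without that replacement (or an independent proof that dilations are asymptotically optimal, or a citation of a sharp essential-norm formula for linear-fractional weighted composition operators), the upper bound is not established except in the easy range $p\ge4$ where $\|\Gamma_\mu\|_{e,A^p}\le\|\Gamma_\mu\|_{A^p}=\int_0^1\Theta_p\,d\mu$ suffices.
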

\noindent 
It is clear from the theorem above that $\Gamma_{\mu}$ is never compact on $A^p$ if $2\leq p<\infty$. On the other hand, if $1< p<2$,  $\Gamma_\mu$ is compact in $A^p$ if and only if $\mu(t)=\delta_0(t)$, that is, if $\mu(t)$ is the Dirac point mass $0$. Indeed, in this case,
\[
 \Gamma_{\delta_0}(f)(z)=f(0)\dfrac{1}{1-z}
 \]
 which is clearly compact as a rank one  operator.

\begin{thm}\label{Compact p=1}
Let $\Gamma_{\mu}$ be bounded on $A^1$. $\Gamma_\mu$ is compact if and only if $\mu(t)=\delta_0(t)$. Nevertheless, for every probability measure $\mu$ such that $\Gamma_{\mu}$ is bounded on $A^1$, $\Gamma_\mu$ is completely continuous.
\end{thm}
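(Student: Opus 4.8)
The plan is to prove the three assertions separately: that $\mu=\delta_0$ forces compactness, that it is also necessary, and that complete continuity holds for every bounded $\Gamma_\mu$. The first requires nothing beyond the remark already made: $\Gamma_{\delta_0}(f)(z)=f(0)/(1-z)$ with $1/(1-z)\in A^1$, so $\Gamma_{\delta_0}$ has rank one and is compact.

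For the necessity, the usual device of testing $\Gamma_\mu$ against normalized reproducing kernels is \emph{not} available here: $A^1$ is non-reflexive, and, by the complete continuity proved below, $\Gamma_\mu$ sends every weakly null sequence to a norm null one, so no weakly null family can witness non-compactness. I would instead argue by interpolation, transferring compactness into the reflexive range $1<p<2$. First, boundedness of $\Gamma_\mu$ on $A^1$ already gives boundedness on $A^{p_0}$ for every $p_0\in(1,2)$: for such $p_0$ one has $\Theta_{p_0}(t)=(1-t)^{-2/p_0}\le(1-t)^{-2}=\Theta_1(t)$ on $[0,1)$, so $\int_0^1\Theta_{p_0}\,d\mu\le\int_0^1\Theta_1\,d\mu<\infty$ and Theorem~\ref{main theom boundedness} applies. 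Now suppose, for contradiction, that $\Gamma_\mu$ is compact on $A^1$. Applying the interpolation theorem for compact operators (complex method, after Cwikel) to the couple $(A^1,A^{p_0})$, on which $\Gamma_\mu$ is compact and bounded respectively, together with the identification $[A^1,A^{p_0}]_\theta=A^{p_\theta}$, $p_\theta^{-1}=(1-\theta)+\theta p_0^{-1}$, standard for the Bergman scale, shows that $\Gamma_\mu$ is compact on $A^{p_\theta}$ for every $p_\theta\in(1,p_0)$. Since such $p_\theta$ lie in $(1,2)$, Theorem~\ref{Compact p>1} and the remark following it force $\mu=\delta_0$, a contradiction. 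I expect this to be the main obstacle, precisely because the non-reflexivity of $A^1$ blocks the direct test-function route.

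For the complete continuity, let $f_n\to 0$ weakly in $A^1$; then $\norm{f_n}_{A^1}\le M$ for all $n$ and $f_n\to0$ uniformly on compact subsets of $\mathbb D$. The extra ingredient I would use is that, since the inclusion $A^1\hookrightarrow L^1(\mathbb D,dA)$ is weak-to-weak continuous and $(f_n)$ is relatively weakly compact in $A^1$, the sequence $(f_n)$ is relatively weakly compact in $L^1(\mathbb D,dA)$, hence uniformly integrable by the Dunford--Pettis theorem; in particular $\omega(\delta):=\sup_n\int_{\mathbb D\cap\{|1-z|<\delta\}}|f_n|\,dA\to0$ as $\delta\to0^+$. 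Using the representation $\Gamma_\mu=\int_0^1T_t\,d\mu(t)$ with $T_t f=w_t\cdot f\circ\varphi_t$, one has $\norm{\Gamma_\mu f_n}_{A^1}\le\int_0^1\norm{T_tf_n}_{A^1}\,d\mu(t)$, and it suffices to show $\norm{T_tf_n}_{A^1}\to0$ for each fixed $t\in[0,1)$ together with an integrable majorant in $t$.

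For $t=0$ this is clear since $\norm{T_0f_n}_{A^1}=|f_n(0)|\,\norm{(1-z)^{-1}}_{A^1}$ and $f_n(0)\to0$. For $t\in(0,1)$, the M\"obius change of variables $u=\varphi_t(z)$ gives $\norm{T_tf_n}_{A^1}=\int_{\varphi_t(\mathbb D)}|f_n(u)|\,h_t(u)\,dA(u)$, where $h_t$ is bounded on $\overline{\varphi_t(\mathbb D)}$ (which contains no singularity of $h_t$, since $0\notin\overline{\varphi_t(\mathbb D)}$ and the pole $1/(1-t)$ of $w_t$ lies outside $\overline{\mathbb D}$); splitting the domain according to $|1-u|\ge\delta$ or $|1-u|<\delta$, and using $\overline{\varphi_t(\mathbb D)}\cap\partial\mathbb D=\{1\}$ on the first piece (a compact subset of $\mathbb D$, where $f_n\to0$ uniformly) and $\omega(\delta)$ on the second, yields $\limsup_n\norm{T_tf_n}_{A^1}\le C_t\,\omega(\delta)$, hence $\norm{T_tf_n}_{A^1}\to0$. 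Finally $\norm{T_tf_n}_{A^1}\le\norm{T_t}_{A^1\to A^1}M\le CM(1-t)^{-2}$ by Lemma~\ref{L:estimates T_t}, and $(1-t)^{-2}\in L^1(\mu)$ because $\Gamma_\mu$ is bounded on $A^1$; dominated convergence then gives $\int_0^1\norm{T_tf_n}_{A^1}\,d\mu(t)\to0$, so $\Gamma_\mu f_n\to0$ in $A^1$. The one nontrivial point here is the passage from weak nullity in $A^1$ to uniform integrability in $L^1(\mathbb D,dA)$, which is exactly what controls each weighted composition operator $T_t$ near the single boundary point $1$ reached by all the maps $\varphi_t$.
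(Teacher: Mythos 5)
Your rank-one argument for the sufficiency and your complete-continuity argument are both correct, and the latter is in substance the paper's proof: the paper's Lemma~\ref{Lem aux} establishes $\|T_t f_n\|_{A^1}\to0$ for each fixed $t$ by combining local uniform convergence with the fact that $\overline{\varphi_t(\mathbb D)}$ meets $\partial\mathbb D$ only at $1$, citing Dunford--Schwartz for the passage from convergence in measure to norm convergence, and then concludes by dominated convergence in $t$ exactly as you do. Your Dunford--Pettis/uniform-integrability step and the explicit change of variables with the bounded density $h_t$ simply make transparent what the paper delegates to that citation; this part is sound.

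The necessity part is where you genuinely diverge, and the divergence rests on a false premise. A witness of non-compactness need not be weakly null: it suffices to produce a \emph{bounded} sequence whose image has no norm-convergent subsequence, and the operative tool is \cite[Lemma 3.7]{Tjani2003}, which for operators on $A^1$ continuous in the topology of local uniform convergence characterizes compactness by the requirement that bounded, locally uniformly null sequences be mapped to norm-null sequences. The paper applies this to $f_a/\|f_a\|_{A^1}$ with $f_a=(1-z)^{-a}$, $a\to2^-$, and Lemma~\ref{lem-fond} gives $\liminf_{a\to2}\|\Gamma_\mu(f_a/\|f_a\|_{A^1})\|_{A^1}\ge\int_0^1 t(1-t)^{-2}\,d\mu(t)>0$ whenever $\mu\neq\delta_0$; note that these normalized test functions are \emph{not} weakly null in $A^1$, which is why this is consistent with complete continuity. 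So the ``main obstacle'' you identify is not there. Your interpolation substitute can be repaired but is not correct as stated: one-sided compactness interpolation for the \emph{complex} method is an open problem in general, and Cwikel's unconditional theorem is for the \emph{real} method. To make your route rigorous you would need to use $(A^1,A^{p_0})_{\theta,p_\theta}=A^{p_\theta}$ (which does hold, since the couple is a retract of $(L^1,L^{p_0})$ via a Bergman-type projection $P_s$ with $s>0$ bounded at both endpoints) together with Cwikel's real-method theorem, or else justify the complex-method statement for this particular couple. With that repair the argument closes, but it replaces a two-line test-function computation with substantially heavier machinery.
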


\noindent
The rest of the paper is organized as follows: In Section 2,  we recall the classical properties of the Bergman space and we prove that  $\Gamma_\mu(f)$ is a well-defined analytic function and that the action of $\Gamma_ \mu$ coincides with that of $\tilde{\Gamma}_\mu$ on $A^p$.  We also provide an upper estimate for $\|T_t\|_{A^p}$ when $1\leq p<\infty$. 
We split the proof of Theorem \ref{main theom boundedness} into Sections 3 and 4. In Section 3, we prove the boundedness of $\Gamma_\mu$ in $A^p$ when $1\leq p<\infty$ but $p\neq 2$. In Section 4, we fix our attention on the case $p=2$ and we also provide some conditions for boundedness of which some are sufficient and some are necessary. In Section 5, we deal with the compactness and we prove Theorems \ref{Compact p>1} and \ref{Compact p=1}.

\vspace{11 pt}
\noindent
 Before we dive into calculations, we first clarify the notation that we will use in the following sections. 
With $\mathbb D$ we refer to the unit disc in the complex plane $\mathbb C$ and with $\mathbb T$ its boundary $\partial \mathbb D$. 
Given a set $M$, by $\chi_M$ we denote the characteristic function associated to the set $M$. 
We use the expression $\|f\|_{X}$ to denote the norm of an element $f \in X$. Moreover, if $T$ is an operator from the space $X$ to $X$ by $\|T\|_X$ we mean its operator norm, that is 
$$
\|T\|_X=\sup_{\|f\|_X=1}\|T(f)\|_X.
$$
 Even if the two notations coincide, this should not cause confusion in this context.
Finally, by the expressions $f \lesssim g$ and $g \gtrsim f$, we mean that there exists a constant $C>0$ such that  
$$
f\leq C g. 
$$
If both \(f\lesssim g\) and \(f\gtrsim g\) hold, we write \(f\sim g\). We also highlight that by the capital letter \(C\), we denote constants whose values may change every time they appear. 
\vspace{11 pt}
\section{Preliminaries}
\noindent 
First of all, for the sake of completeness, we recall the properties of the Bergman spaces $A^p$ that will be used in the rest of the paper.
For  $1\leq p<\infty$, the
Bergman space $A^p$ consists of all the analytic functions in the unit disc such that
\[
\norm{f}_{A^p}\,=\,\left(\int_{\mathbb D}|f(z)|^p \, dA(z)\right)^{1/p} <\infty,
\]
where $dA(z)={dx dy}/{\pi}$ is the normalized Lebesgue area measure.
We recall that, if $f\in A^p$ with $1\leq p<\infty$, the  growth estimates 
\begin{equation}\label{growth}
\vert f(z)\vert
\,\leq\,
 \left(\frac{1}{1-\vert z\vert^2}\right)^{\frac{2}{p}}\norm{f} _{A^p},\quad z\in \mathbb{D}
\end{equation}
and for some independent \(C>0\)  
\begin{equation}\label{growthderivative}
\vert f'(z)\vert
\,\leq\, C
 \left(\frac{1}{1-\vert z\vert^2}\right)^{\frac{2}{p}+1}\norm{f} _{A^p},\quad z\in \mathbb{D}
\end{equation}
hold, see \cite[p.~755]{Vukotic1993} and \cite[p.~338]{Luecking1993} respectively. Moreover, if $f(z)=\sum_{n\geq 0}a_nz^n$, then
\begin{equation}\label{norm bergman series}
    \|f\|_{A^2}^2=\sum_{n\geq 0}\dfrac{|a_n|^2}{n+1},
\end{equation}
see \cite[p.~11]{Duren2004}.  We remark that the Taylor partial sums of $f$, that is, the polynomials
\[
S_n(f)(z)=\sum_{i=0}^n\hat{f}(i)z^i
\]
 with $n \in \mathbb N$, converge in $A^p$ norm to 
\[
f(z)=\sum_{i=0}^\infty\hat{f}(i)z^i
\]
when \(1<p<\infty\), see \cite[p.~31, Theorem 4]{Duren2004}. For more information about the Bergman spaces we refer to the classical monographs \cite{Duren2004} and \cite{hedenmalm2012}.

\vspace{11 pt}
\noindent 
The first thing that needs to be verified is that the integral \eqref{E:integral representation} involved in the definition of  $\Gamma_\mu(f)$ is a well-defined analytic function in $\mathbb D$. We follow the reasoning  of \cite{Diamantopoulos2004}.

\begin{prop}\label{P:welldefinition}
For \(1\leq p<\infty\), let 
\begin{equation*}
\psi_p=
  \int_{0}^1 \frac{1}{(1-t)^{2/p}}\,d\mu(t).
\end{equation*}
If $\psi_p<\infty$, then for every $f \in A^p$,  $\Gamma_\mu(f)$ is a well-defined analytic function.
\end{prop}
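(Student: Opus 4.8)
The plan is to show that the integral $\int_0^1 T_t(f)(z)\,d\mu(t)$ converges absolutely and locally uniformly on $\mathbb{D}$, whence Morera's theorem (or the standard theorem on integrals of analytic functions depending holomorphically on a parameter) gives analyticity. First I would fix $f \in A^p$ and a point $z \in \mathbb{D}$, and estimate $|T_t(f)(z)| = |w_t(z)|\,|f(\varphi_t(z))|$. For the composition factor I would apply the growth estimate \eqref{growth}, which gives
\[
|f(\varphi_t(z))| \leq \left(\frac{1}{1-|\varphi_t(z)|^2}\right)^{2/p}\|f\|_{A^p}.
\]
The weight factor $|w_t(z)| = |1+(t-1)z|^{-1}$ is comparable, for fixed $z$, to a constant uniformly in $t \in [0,1)$; more precisely $|1+(t-1)z| \geq 1-|z|$, so $|w_t(z)| \leq 1/(1-|z|)$.

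The key step is to control $1-|\varphi_t(z)|^2$ from below in terms of $1-t$ and quantities depending only on $z$. Since $\varphi_t(\mathbb{D}) = D\!\left(\frac{1}{2-t},\frac{1-t}{2-t}\right)$, one can compute $1-|\varphi_t(z)|^2$ explicitly: writing $\varphi_t(z) = t/(1+(t-1)z)$, a direct calculation yields
\[
1-|\varphi_t(z)|^2 = \frac{|1+(t-1)z|^2 - t^2}{|1+(t-1)z|^2} = \frac{(1-t)\big(|1+(t-1)z|^2/(1-t) - \text{(correction)}\big)}{|1+(t-1)z|^2},
\]
and the numerator $|1+(t-1)z|^2 - t^2$ factors so that it is bounded below by $c(z)(1-t)$ for a constant $c(z)>0$ depending continuously on $z$ and bounded away from $0$ on compact subsets of $\mathbb{D}$ (this reflects the geometric fact that $\overline{\varphi_t(\mathbb{D})}$ touches $\partial\mathbb{D}$ only at $1$, so for $z$ in a fixed compact set the image point $\varphi_t(z)$ stays a definite distance from the boundary, scaled by $1-t$). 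Combining, for $z$ in a compact set $E \subset \mathbb{D}$ I get
\[
|T_t(f)(z)| \leq \frac{C_E}{(1-t)^{2/p}}\,\|f\|_{A^p}
\]
uniformly for $z \in E$, with $C_E$ depending only on $E$.

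Integrating this bound against $d\mu$ gives $\int_0^1 |T_t(f)(z)|\,d\mu(t) \leq C_E\,\psi_p\,\|f\|_{A^p} < \infty$ by hypothesis, so the defining integral converges absolutely, and uniformly on each compact $E$. Since for each fixed $t$ the map $z \mapsto T_t(f)(z)$ is analytic on $\mathbb{D}$ (as $\varphi_t$ maps $\mathbb{D}$ into $\mathbb{D}$ and $w_t$ is analytic there), and $t \mapsto T_t(f)(z)$ is measurable, the locally uniform convergence of the integral lets me interchange $\int_\mathbb{D}$ (or a triangle in Morera's theorem) with $\int_0^1 d\mu$, concluding that $\Gamma_\mu(f)$ is analytic on $\mathbb{D}$.

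The main obstacle is the explicit lower bound for $1-|\varphi_t(z)|^2$: one must track carefully how the constant degenerates as $z \to 1$ and confirm it stays bounded below on compacta, and that the exponent $2/p$ is exactly what makes $\psi_p$ the right quantity. This is a routine but slightly delicate computation using the disc description $\varphi_t(\mathbb{D}) = D\!\left(\frac{1}{2-t},\frac{1-t}{2-t}\right)$; everything else (the growth estimate, the bound on $w_t$, Morera) is standard and follows the template of \cite{Diamantopoulos2004}.
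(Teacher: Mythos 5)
Your proof is correct, and its heart coincides with the paper's: the pointwise bound $|T_t(f)(z)|\lesssim (1-t)^{-2/p}\|f\|_{A^p}$ for $z$ in a compact set, obtained from the growth estimate \eqref{growth} together with a lower bound for $1-|\varphi_t(z)|^2$. The factorization you flag as ``routine but slightly delicate'' is indeed routine and closes without trouble: $|1-(1-t)z|^2-t^2=\bigl(|1-(1-t)z|-t\bigr)\bigl(|1-(1-t)z|+t\bigr)\geq (1-t)(1-|z|)\cdot(1-|z|)$, so $c(z)=(1-|z|)^2$ works and is bounded away from $0$ on compacta, and the exponent $2/p$ then lands exactly on $\psi_p$. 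Where you genuinely diverge is the analyticity step: you invoke Morera's theorem together with Fubini and the joint continuity of $(t,z)\mapsto T_t(f)(z)$, whereas the paper approximates $f$ by polynomials $P_n$ in $A^p$-norm and applies the same pointwise bound to $f-P_n$ to conclude that the analytic functions $\Gamma_\mu(P_n)$ converge to $\Gamma_\mu(f)$ uniformly on compacta. Both closings are standard and valid. The paper's choice has the side benefit that the resulting quantitative estimate \eqref{E:welldefinition} is reused verbatim in the following proposition to identify $\Gamma_\mu$ with $\tilde{\Gamma}_\mu$ on $A^p$; your route avoids polynomial density (so it would survive in non-separable settings) but requires you to record explicitly that the locally uniform absolute convergence also yields continuity of $\Gamma_\mu(f)$ before Morera can be applied.
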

\begin{proof}
We first prove  that for every fixed $z \in \mathbb D$,  $\Gamma_\mu(f)(z)$ is well-defined. Indeed, due to \eqref{growth}, we have
\begin{align*}
    |{\Gamma_\mu(f)(z)}|\leq& \int_0^1 \frac{1}{|{1-(1-t)z}|}|f(\varphi_t(z))|\,d\mu(t)\\
    \leq & \int_0^1 \frac{1}{|{1-(1-t)z}|^{1-2/p}}  \frac{1}{(1-t)^{2/p}}\,d\mu(t)\cdot\frac{\|f\|_{A^p} }{(1-|z|)^{2/p}}\\
    \leq & \frac{\|f\|_{A^p} }{(1-|z|)^{2/p+1}}  \int_0^1  \frac{1}{(1-t)^{2/p}}\,d\mu(t).
\end{align*}    
\noindent
In order to prove that $\Gamma_\mu(f)$ is analytic in $  \mathbb D$, we show that there exists a sequence of analytic functions which converges to  $\Gamma_\mu(f)$ uniformly on every compact subset of $\mathbb D$.  
Let $\{P_n\}_n$ be a family of polynomials such that  
 $$
 \lim_{n \to \infty }\|f-P_n\|_{A^p}=0,
 $$
 see \cite[p.~30, Theorems 3 and 4]{Duren2004}. 
 It is clear that for every $n\in \mathbb{N}$, $\Gamma_\mu(P_n)$ is analytic in $\mathbb{D}$. Therefore, for every $z$ in a compact set $K\subset \mathbb{D}$,
 \begin{equation}
 \label{E:welldefinition}
 \begin{split}
\big| \Gamma_\mu(f)(z)-\Gamma_\mu(P_n)(z)\big|&\leq \|f-P_n\|_{A^p} \frac{ \psi_p }{(1-|z|)^{2/p+1}}\\
&\leq  \|f-P_n\|_{A^p}\frac{ \psi_p }{\text{dist}(K,\partial \mathbb D)^3},
 \end{split}
 \end{equation}
 which concludes the proof of the proposition. \\
\end{proof}

\noindent 
The fact that in $A^p$ the operators  $\Gamma_\mu$ and  $\tilde{\Gamma}_\mu$ coincide requires some standard estimates. For the sake of completeness, we will write them down.

\begin{prop}
Let $1\leq  p<\infty $ and $\psi_p<\infty$, where $\psi_p$ is defined as in Proposition \ref{P:welldefinition}. If $f \in A^p$, then  
$$
\Gamma_\mu(f)=\tilde{\Gamma}_\mu(f).
$$
\end{prop}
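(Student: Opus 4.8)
The plan is to show that the two a priori different objects $\Gamma_\mu(f)$ and $\tilde\Gamma_\mu(f)$ have the same Taylor coefficients at the origin, which for analytic functions on $\mathbb{D}$ (both are analytic: $\Gamma_\mu(f)$ by Proposition~\ref{P:welldefinition}, $\tilde\Gamma_\mu(f)$ by its defining series whenever that series converges) forces them to coincide. So the real content is: for each fixed $n\ge 0$, the $n$-th Taylor coefficient of the integral representation equals $\sum_{k\ge 0} a_k \gamma_{nk}$, where $f(z)=\sum_{k\ge 0} a_k z^k$.

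First I would compute, for fixed $t\in(0,1)$, the Taylor expansion of the weighted composition $T_t(f)(z) = w_t(z)\, f(\varphi_t(z))$ at $z=0$. Using $\varphi_t(z) = t/(1+(t-1)z)$ and $w_t(z) = 1/(1+(t-1)z)$, one expands $w_t(z)(\varphi_t(z))^k = t^k (1+(t-1)z)^{-(k+1)} = t^k \sum_{n\ge 0} \binom{n+k}{k}(1-t)^n z^n$ via the generalized binomial series (valid since $|(t-1)z|<1$ on compact subsets of $\mathbb{D}$). Summing against $a_k$, the $n$-th coefficient of $T_t(f)$ is $\sum_{k\ge 0} a_k \binom{n+k}{k} t^k (1-t)^n$, so that $T_t(f)(z) = \sum_n \big(\sum_k a_k\binom{n+k}{k}t^k(1-t)^n\big)z^n$. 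Then I would integrate in $d\mu(t)$ and interchange the $\int_0^1 d\mu$ with the coefficient extraction $\frac{1}{n!}\frac{d^n}{dz^n}\big|_{z=0}$ (equivalently, with a Cauchy integral over a small circle $|z|=r$); this interchange is where the hypothesis $\psi_p<\infty$ enters, via the uniform bound $|T_t(f)(z)| \le \|f\|_{A^p}(1-t)^{-2/p}|1+(t-1)z|^{-(1-2/p)}(1-|z|)^{-2/p}$ established inside the proof of Proposition~\ref{P:welldefinition}, which on $|z|=r$ is dominated by a $\mu$-integrable function of $t$ times a constant depending on $r$; Fubini then applies.

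The last step is to justify the interior interchange of $\int_0^1 d\mu(t)$ with $\sum_{k\ge 0} a_k(\cdots)$: I would bound $\sum_k |a_k|\binom{n+k}{k} t^k(1-t)^n$ by using the coefficient growth $|a_k| \lesssim k^{2/p}\|f\|_{A^p}$ (which follows from \eqref{growth} by Cauchy estimates, or directly) together with $\sum_k \binom{n+k}{k} k^{2/p} t^k = O_n\big((1-t)^{-n-1-2/p}\big)$, so that the double integral $\int_0^1 \sum_k |a_k|\binom{n+k}{k}t^k(1-t)^n \,d\mu(t) \lesssim_n \|f\|_{A^p}\int_0^1 (1-t)^{-2/p-1}\,d\mu(t)$; the finiteness of this last integral is exactly (a mild strengthening of) $\psi_p<\infty$, and in any case finiteness of $\psi_p$ together with $\mu$ being finite on $[0,1)$ suffices after splitting $(1-t)^{-2/p-1} = (1-t)^{-2/p}\cdot(1-t)^{-1}$ and noting $(1-t)^{-1}$ is bounded on the support away from $1$—or, more cleanly, one argues coefficient-wise after first fixing a truncation of the $k$-sum and passing to the limit using dominated convergence. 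With Fubini justified, $\gamma_{nk} = \binom{n+k}{k}\int_0^1 t^k(1-t)^n\,d\mu(t)$ emerges as the coefficient, and we are done.

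The main obstacle is purely the bookkeeping of these two interchanges of sum/integral and integral/coefficient-extraction under the single standing hypothesis $\psi_p<\infty$; there is no conceptual difficulty, but one must be slightly careful that the crude bound $|a_k|\lesssim k^{2/p}\|f\|_{A^p}$ only produces an extra integrable factor when combined with the $\binom{n+k}{k}$ growth, so I would organize the estimate to isolate the $t$-dependence as $(1-t)^{-2/p}$ up to an $n$-dependent constant. Since $n$ is fixed throughout (we only care about matching coefficients one $n$ at a time), $n$-dependent constants are harmless, and the argument closes cleanly.
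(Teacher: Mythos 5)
Your overall strategy---identify the $n$-th Taylor coefficient of the integral representation with $\sum_k a_k\gamma_{nk}$---is in spirit the same as the paper's, and your first interchange (Cauchy coefficient extraction versus $\int_0^1 d\mu$) is correctly justified by the pointwise bound from Proposition \ref{P:welldefinition}. The gap is in the second interchange, $\int_0^1\sum_k=\sum_k\int_0^1$. Your Tonelli-type bound produces $\int_0^1(1-t)^{-2/p-1}\,d\mu(t)$, and this is genuinely \emph{not} controlled by $\psi_p<\infty$: take $d\mu(t)=c\,(1-t)^{2/p}\,dt$, for which $\psi_p=c<\infty$ while $\int_0^1(1-t)^{-2/p-1}\,d\mu(t)=c\int_0^1(1-t)^{-1}\,dt=\infty$. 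Your proposed repair---``$(1-t)^{-1}$ is bounded on the support away from $1$''---is not available, because $\mu$ is an arbitrary probability measure on $[0,1)$ and may charge every neighbourhood of $t=1$ (as in the example just given, and as it must in all the interesting cases treated in the paper). So the absolute-convergence route does not close as written.

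Your fallback---truncate the $k$-sum and pass to the limit by dominated convergence---is the right idea, but it is exactly where the work lies and you have not supplied the dominating function. The natural choice is to observe that $\sum_{k=0}^N a_k\binom{n+k}{k}t^k(1-t)^n$ is the $n$-th Taylor coefficient of $T_t(S_N)$, bound it via Cauchy's estimate by $r^{-n}\sup_{|z|=r}|T_t(S_N)(z)|\lesssim_{n,r}(1-t)^{-2/p}\|S_N\|_{A^p}$, and then use $\sup_N\|S_N\|_{A^p}\lesssim_p\|f\|_{A^p}$; this works for $1<p<\infty$ but fails for $p=1$, where the Taylor partial sum operators are not uniformly bounded on $A^1$. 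This is precisely why the paper splits its proof: for $1<p<\infty$ it exploits norm convergence $S_N\to f$ together with the estimate \eqref{E:welldefinition} (so that only convergence of the partial sums $\sum_{k\le N}a_k\gamma_{\ell k}$, not absolute summability, is ever needed), while for $p=1$ it proves absolute convergence of the full double series by summing in $n$ \emph{first}, against the factor $|z|^n$, which turns $\sum_n\binom{n+k}{k}(1-t)^n|z|^n t^k$ into $\varphi_t(|z|)^k w_t(|z|)$ and yields the integrable majorant $(1-t)^{-2}(1-|z|)^{-2}$. You need one of these two devices (or an equivalent) to complete the argument, most critically in the case $p=1$.
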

\begin{proof}
It is clear that for every analytic polynomial
$$
\Gamma_\mu(P_n)=\tilde{\Gamma}_\mu(P_n).
$$
 We first consider the case $1<p<\infty$ and $f(z)=\sum_{k\geq 0} a_k z^k \in A^p$. Then for every $z \in \mathbb{D}$, by the proof of Proposition \ref{P:welldefinition},
\begin{align*}
    \lim_{n\to \infty} |{\Gamma}_\mu(f)(z)-\tilde{\Gamma}_\mu(S_n)(z)|&= \lim_{n\to \infty}  |{\Gamma}_\mu(f)(z)-{\Gamma}_\mu(S_n)(z)|=0,
\end{align*}
where $S_n$ is the Taylor partial sum.
Since ${\Gamma}_\mu(f)$ and $\tilde{\Gamma}_\mu(S_n)$ are analytic functions, the Taylor coefficients coincide, that is, for every $\ell \in \mathbb{N}$,
$$
\widehat{\Gamma_\mu(f)}(\ell)=\lim_{n \to \infty} \sum_{k= 0}^{n}a_k\, \binom{\ell+k}{k}\int_0^1
t^k(1-t)^\ell \,d\mu(t),
$$
which concludes the first part of the proof.

\vspace{11 pt}\noindent 
For $p=1$ and  $f(z)=\sum_{k\geq 0} a_k z^k \in A^1$, from \eqref{growth} and \eqref{growthderivative}, we note that
\begin{align*}
    |a_0|&\leq \|f\|_{A^1}\, \text{ and }\,  |a_1|\leq C\|f\|_{A^1}.
\end{align*}
Moreover, by the Cauchy formula, for every $r\geq 1/2$ and $n\geq 2$, we note that
\begin{align*}
    |a_n|&\leq r^{-n} \int_{0}^{2\pi}|f(re^{i\theta})|\dfrac{d\theta}{2\pi}\\
    &=r^{-n}(1-r)^{-1}\left\lbrace  (1-r)\int_{0}^{2\pi}|f(re^{i\theta})|\dfrac{d\theta}{2\pi}\right\rbrace\leq r^{-n}(1-r)^{-1} 2 \|f\|_{A^1}.
\end{align*}
In particular, by choosing $r=1-1/n$, we obtain that
$$
   |a_n| \leq 2\left( 1-\dfrac{1}{n}\right)^{-n} (n+1)\|f\|_{A^1}\leq 8 (n+1)\|f\|_{A^1}.
$$
Therefore, for every $z \in \mathbb{D}$, 
\begin{align*}
    |\tilde{\Gamma}_\mu(f)(z)|&= \left\vert\sum_{n= 0}^{\infty}\left(\sum_{k= 0}^{\infty}a_k\, \binom{n+k}{k}\int_0^1
t^k(1-t)^n \,d\mu(t)\right)\,z^n \right\vert\\
&\leq \sum_{n= 0}^{\infty}\left(\sum_{k= 0}^{\infty}|a_k|\, \binom{n+k}{k}\int_0^1
t^k(1-t)^n \,d\mu(t)\right)\,|z|^n \\
&\leq C \sum_{n= 0}^{\infty}\left(\sum_{k= 0}^{\infty}(k+1)\, \binom{n+k}{k}\int_0^1
t^k(1-t)^n \,d\mu(t)\right)\,|z|^n  \cdot \|f\|_{A^1}.
\end{align*}
Thus, since 
\begin{small}
$$
\sum_{n= 0}^{\infty} \binom{n+k}{k}
(1-t)^n |z|^n t^k= \left( \dfrac{t}{1-(1-t)|z|}\right)^k \dfrac{1}{1-(1-t)|z|}=\varphi_t(|z|)^k w_t(|z|),
$$
\end{small}
we have that  
\begin{align*}
|\tilde{\Gamma}_\mu(f)(z)|&\lesssim  \int_0^1 \left(\sum_{k= 0}^{\infty}(k+1)\varphi_t(|z|)^k  \right)
w_t(|z|) \,d\mu(t) \cdot \|f\|_{A^1}\\
&=\int_{0}^1\frac{1}{(1-\varphi_t(|z|))^{2}} w_t(|z|)\,d\mu(t)\cdot \|f\|_{A^1} \\
&\leq \int_{0}^1\frac{1}{(1-t)^{2}}\,d\mu(t)\cdot \frac{\|f\|_{A^1}}{(1-|z|)^{2}}\\
&\leq \psi_1 \cdot \frac{\|f\|_{A^1}}{(1-|z|)^{2}}.
\end{align*}

\noindent
Therefore, if $P_n \to f $ in $A^1$, due to \eqref{E:welldefinition}, for every $z \in \mathbb D$, we have
$$
\Gamma_\mu(f)(z)=\lim_{n \to \infty}\Gamma_\mu(P_n)(z)=\lim_{n \to \infty}\tilde{\Gamma}_\mu(P_n)(z)=\tilde{\Gamma}_\mu(f)(z),
$$
which concludes the proof for $p=1$.\\
\end{proof}
\noindent The weighted composition operator
$$
T_{t}(f)(z) =\frac{1}{(t - 1)z + 1}f\left(\frac{t}{(t-1)z + 1}\right)
$$
has been intensively studied in $A^p$ in relation to the classical Hilbert matrix operator. In specific, Diamantopoulos \cite{Diamantopoulos2004} estimated its norm when $2<p<\infty$. We extend this result to the case $1\leq p \leq 2$.
\begin{lem}\label{L:estimates T_t}
Let $1 \leq p <\infty$ and $f\in A^p$. Then
\begin{equation}\label{E:norm T_t}
    \norm{T_t(f)}_{A^p}\leq C(p)\; \Psi_p(t)\;\norm{f}_{A^p}\
\end{equation}
where $C(p)$ is a positive constant depending only on $p$ and 
\begin{equation}
\Psi_p(t)= 
\begin{cases}
    \dfrac{t^{\frac{2}{p}-1}}{(1-t)^{\frac{2}{p}}},\,\,\,\, &\text{if} \,\, 2<p<\infty;
    \\[0.2in]
   \dfrac{\sqrt{\log\left(e/t\right)}}{1-t},\, \,\,\,&\text{if}\,\, p=2;
       \\[0.2in]
   \dfrac{1}{(1-t)^{2/p}},\, \,\,\,&\text{if}\,\, 1\leq p<2 .
\end{cases} 
\end{equation}
Moreover, if $p\geq 4$, we have $C(p)=1$.
\end{lem}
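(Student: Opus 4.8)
The plan is to reduce the whole statement to one change of variables and then to purely geometric estimates on the image disc $\varphi_t(\mathbb D)$. Fix $t\in(0,1)$ (the case $t=0$ is trivial, since then $T_0(f)=f(0)(1-z)^{-1}$ and $\Psi_p(0)=\infty$ for $p\ge2$, while $(1-z)^{-1}\in A^p$ for $1\le p<2$) and write $\Omega_t:=\varphi_t(\mathbb D)=D\big(\tfrac1{2-t},\tfrac{1-t}{2-t}\big)$. Since $\varphi_t$ is a M\"obius self-map of $\mathbb D$, it is a conformal bijection of $\mathbb D$ onto $\Omega_t$, with inverse $\varphi_t^{-1}(w)=\tfrac{w-t}{(1-t)w}$; a direct computation then gives $w_t\big(\varphi_t^{-1}(w)\big)=\tfrac{w}{t}$ and $(\varphi_t^{-1})'(w)=\tfrac{t}{(1-t)w^{2}}$. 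Substituting $w=\varphi_t(z)$ in $\|T_t(f)\|_{A^p}^p=\int_{\mathbb D}|w_t(z)|^p|f(\varphi_t(z))|^p\,dA(z)$ and using $dA(z)=|(\varphi_t^{-1})'(w)|^{2}\,dA(w)$ yields the identity
\[
\|T_t(f)\|_{A^p}^p=\frac{t^{\,2-p}}{(1-t)^{2}}\int_{\Omega_t}\frac{|f(w)|^p}{|w|^{\,4-p}}\,dA(w).
\]
So everything reduces to the estimate $\int_{\Omega_t}|f(w)|^p|w|^{p-4}\,dA(w)\le C(p)^p\,\Lambda_p(t)\,\|f\|_{A^p}^p$, where $\Lambda_p(t)=1$ for $p>2$, $\Lambda_p(t)=t^{p-2}$ for $1\le p<2$, and $\Lambda_2(t)=\log(e/t)$: multiplying back by $t^{2-p}/(1-t)^{2}$ and taking $p$-th roots produces exactly $C(p)\Psi_p(t)$, with $C(p)=1$ in the range $p\ge4$.

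Two elementary facts about $\Omega_t$ will be used throughout: $\overline{\Omega_t}\subset\overline{\mathbb D}$ (hence $|w|<1$ on $\Omega_t$), and, since the centre minus the radius of $\Omega_t$ is $\tfrac1{2-t}-\tfrac{1-t}{2-t}=\tfrac{t}{2-t}$, one has $|w|\ge\tfrac{t}{2-t}$ for all $w\in\Omega_t$. For $p\ge4$ the weight satisfies $|w|^{p-4}\le1$ on $\mathbb D$, so $\int_{\Omega_t}|f|^p|w|^{p-4}\,dA\le\int_{\mathbb D}|f|^p\,dA=\|f\|_{A^p}^p$, giving $C(p)=1$ at once. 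For $2<p<4$ I would split $\Omega_t$ at $|w|=\tfrac12$: on $\Omega_t\cap\{|w|\ge\tfrac12\}$ the weight is $\le2^{\,4-p}$, so that contribution is $\lesssim_p\|f\|_{A^p}^p$ because $\Omega_t\subset\mathbb D$; on $\Omega_t\cap\{|w|<\tfrac12\}$ the growth estimate \eqref{growth} gives $|f(w)|^p\lesssim_p\|f\|_{A^p}^p$, while $\int_{\{|w|<1/2\}}|w|^{p-4}\,dA<\infty$ since $p-4>-2$, so this contribution is also $\lesssim_p\|f\|_{A^p}^p$; summing gives a finite $C(p)$.

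In the remaining range $1\le p\le2$ the prefactor $t^{\,2-p}\le1$ (equivalently $\Lambda_p(t)\ge1$) must be exploited, because now $p-4\le-2$ and the weight $|w|^{p-4}$ is not locally integrable at the origin. Split $\Omega_t$ at $|w|=\tfrac12$ once more. On $\Omega_t\cap\{|w|\ge\tfrac12\}$ the weight is $\lesssim1$ and $t^{2-p}\le1$, so that part is $\lesssim\|f\|_{A^p}^p\le\Lambda_p(t)\|f\|_{A^p}^p$. On $\Omega_t\cap\{|w|<\tfrac12\}$ I would again use $|f(w)|^p\lesssim_p\|f\|_{A^p}^p$ from \eqref{growth} and then enlarge the domain to the annulus $\{\tfrac{t}{2-t}\le|w|<\tfrac12\}$ (legitimate since $|w|\ge\tfrac{t}{2-t}$ on $\Omega_t$), for which
\[
\int_{\frac{t}{2-t}\le|w|<\frac12}|w|^{p-4}\,dA(w)=2\int_{t/(2-t)}^{1/2}r^{\,p-3}\,dr\lesssim_p
\begin{cases}t^{\,p-2},&1\le p<2,\\[2pt]\log(e/t),&p=2.\end{cases}
\]
Multiplying by $t^{\,2-p}$ absorbs the $t^{p-2}$ in the first line and turns into $\log(e/t)$ when $p=2$, which is exactly $\Lambda_p(t)$, completing the reduction.

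The step I expect to be the main obstacle is this last one. Over all of $\Omega_t$ the only available lower bound is $|w|\ge t/(2-t)\to0^{+}$, so estimating $|w|^{p-4}$ by its supremum over the full disc $\Omega_t$ would give a bound that blows up as $t\to0^{+}$; one must instead isolate the thin part of $\Omega_t$ near the origin --- where $f$ is harmless because $0$ is an interior point of $\mathbb D$ --- and read off the compensating factor $t^{p-2}$ (and the borderline $\log(e/t)$ at $p=2$) from the annular integral above. The conformal change of variables and the two containments $\overline{\Omega_t}\subset\overline{\mathbb D}$, $|w|\ge t/(2-t)$ on $\Omega_t$, are the only other ingredients, and both are routine.
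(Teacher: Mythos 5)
Your proof is correct, and for the range $1\le p\le 2$ (the part the paper actually has to prove from scratch, since $2<p<\infty$ is quoted from Diamantopoulos) it takes a genuinely different route. The change-of-variables identity $\|T_t(f)\|_{A^p}^p=\frac{t^{2-p}}{(1-t)^2}\int_{\varphi_t(\mathbb D)}|w|^{p-4}|f(w)|^p\,dA(w)$ is common to both arguments, as is the splitting at $|w|=1/2$ and the use of \eqref{growth} on the inner piece. The difference lies in how the non-integrable singularity of $|w|^{p-4}$ at the origin is handled when $p\le 2$. The paper regularizes the integrand by first writing $f=f(0)+zS^*(f)$ (and, for $p=1$, peeling off one more Taylor term), which raises the exponent of $|w|$ to $2p-4$ and allows the integral to be extended to all of $\mathbb D$; the constant term is then estimated separately via Forelli--Rudin, which is where the $\log(e/t)$ at $p=2$ enters. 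You instead keep $f$ intact and exploit the geometry of $\varphi_t(\mathbb D)$, namely that it omits the disc $\{|w|<t/(2-t)\}$, so the singular weight is integrated only over the annulus $t/(2-t)\le|w|<1/2$; the resulting $t^{p-2}$ (or $\log(e/t)$ at $p=2$) exactly compensates the prefactor $t^{2-p}$. Your version is more economical: it dispenses with the Forelli--Rudin estimates and the norm of the backward shift, and it treats $p=1$, $1<p<2$ and $p=2$ uniformly, whereas the paper needs a separate two-term decomposition at $p=1$ precisely because $|w|^{2p-4}=|w|^{-2}$ is still non-integrable there. The only cosmetic slip is calling $\varphi_t$ a ``M\"obius self-map of $\mathbb D$''---it is a M\"obius transformation mapping $\mathbb D$ conformally onto $\varphi_t(\mathbb D)\subsetneq\mathbb D$---but this does not affect the argument.
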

\begin{proof}
We recall that in \cite[Lemma 2]{Diamantopoulos2004}, the constant appearing in \eqref{E:norm T_t} was estimated for $2<p<\infty$ as 
\[ 
C(p)=
\begin{cases}
 1, \,\,\,\, &\text{if} \,\, 4\leq p<\infty;
\\[0.2in]
\left( \frac{2^{7-p}}{9(p-2)}+2^{4-p}\right)^{1/p},\, \,\,\,&\text{if}\,\, 2< p<4 .
\end{cases}
\]
Let us consider $1< p\leq 2$.
For every $z \in \mathbb D$, $f(z)=f(z)-f(0)+f(0)$. Therefore
\begin{equation}\label{E1}
    \|T_t(f)\|_{A^p}^{p}\leq 2^{p-1}\big( \|T_t(f-f(0))\|_{A^p}^{p}+\|T_t\left(f(0)\right)\|_{A^p}^{p}\big) .
\end{equation}
We start by estimating the second term. We note that
\begin{align*}
    \|T_t\left(f(0)\right)\|_{A^p}^p =&|f(0)|^p\int_{\mathbb D}\frac{1}{|1-(1-t)z|^p} \, dA(z).
\end{align*}
Applying Forelli-Rudin estimates, see \cite[Lemma 3.10]{Zhu1990}, and \eqref{growth}, we get that there is some constant $B_p$ depending only on $p$ such that 
$$
\|T_t\left(f(0)\right)\|_{A^p}^p\leq \|f\|_{A^p}^p \cdot
\begin{cases}
 B_p, \, \,\,\, &\text{if} \,\, 1\leq p<2;
\\[0.2in]
    B_2\log(e/t),\, \,\,\,&\text{if}\,\, p=2 .
\end{cases} 
$$
For the first term in \eqref{E1}, a change of variables yields
\begin{align*}
    \|T_t(f-f(0))\|_{A^p}^p 
    &=\frac{t^{2-p}}{(1-t)^2}\int_{\varphi_t(\mathbb D)} |w|^{2p-4}\left |S^*(f)(w)\right|^p \, dA(w)\\
    \leq& \frac{t^{2-p}}{(1-t)^2} \int_{\mathbb D} |w|^{2p-4}\left |S^*(f)(w)\right|^p \, dA(w)\\
    =& \frac{t^{2-p}}{(1-t)^2}\Big( \int_{|w|\leq 1/2}|w|^{2p-4}\left |S^*(f)(w)\right|^p \, dA(w) \;+\\
    &\qquad + \int_{1/2\leq |w|< 1}|w|^{2p-4}\left |S^*(f)(w)\right|^p \, dA(w)\;\Big)\\
    =&\frac{t^{2-p}}{(1-t)^2}\left(I+II \right),
\end{align*}
where $S^*$ is the backward shift and  we have used the fact that
$$
 |\varphi_t(z)|^4 =\frac{t^{2}}{(1-t)^2}|\varphi_t'(z)|^2.
$$
Then, by using \eqref{growth}, we have 
\begin{align*}
I&\leq \int_{|w|\leq 1/2}\frac{|w|^{2p-4}}{(1-|w|^2)^2}\, dA(w) \|S^*(f)\|_{A^p}^p\\
&\leq \frac{16}{9}\int_{|w|\leq 1/2}|w|^{2p-4} \, dA(w)\, \|S^*\|_{A^p}^p \,\|f\|_{A^p}^p\\
&\leq\frac{2^{(6-2p)}}{9}\frac{1}{p-1}\|S^*\|_{A^p}^p   \|f\|_{A^p}^p
\end{align*}
and
\begin{align*}
II&\leq\int_{ 1/2 \leq |w|<1}\left( \frac{1}{2}\right)^{2p-4} |S^*(f)(w)|^p \, dA(w)\leq 2^{4-2p}\|S^*\|_{A^p}^p  \|f\|_{A^p}^p . 
\end{align*}
Thus 
$$
 \|T_t(f-f(0))\|_{A^p}^{p}\leq \dfrac{2^{5-2p}}{p-1}\|S^*\|_{A^p}^p  \|f\|_{A^p}^p . 
$$
In the case $p=1$, for every $z\in \mathbb D$, we consider
$$
f(z)=f(0)+zf'(0)+z^2\cdot {S^{*}}^2(f)(z),
$$
where ${S^*}^2=S^*\circ S^*$.
Therefore
\begin{equation}\label{E1p=1}
   \|T_t(f)\|_{A^1}\leq  \|T_t(z^2 \cdot {{S^*}^2}(f))\|_{A^1}
   +\|T_t\left(f(0)\right)\|_{A^1}+\|T_t(z\cdot f'(0))\|_{A^1} .   
\end{equation}
We have already estimated the second term. For the third one, by using \eqref{growthderivative}, we note that
\begin{align*}
    \|T_t(zf'(0))\|_{A^1} =&|f'(0)| \cdot t\int_{\mathbb D}\frac{1}{|1-(1-t)z|^{2}}\, dA(z)\\
    \leq& C B_2  \cdot [t \log(e/t)]\cdot\|f\|_{A^1}.
\end{align*}
Finally, with computations similar to those done for $1<p<2$, we have that
\begin{align*}
 \|T_t(z^2\cdot {S^{*}}^2(f))\|_{A^1}=&\frac{1}{t} \int_{\mathbb D} \left|\frac{t}{1-(1-t)z}\right|^{3} \left |{S^{*}}^2(f)\left(\frac{t}{1-(1-t)z}\right)\right| \, dA(z)\\
 =& \frac{t}{(1-t)^2}\Big( \int_{|w|\leq 1/2}|w|^{3-4}\left |{S^{*}}^2(f)(w)\right|\, dA(w) \;+\\
    &\qquad + \int_{1/2\leq |w|< 1}|w|^{3-4}\left |{S^{*}}^2(f)(w)\right|\, dA(w)\;\Big)\\
    \leq&\frac{5t}{(1-t)^2} \|S^*\|_{A^1}^2\|f\|_{A^1} .
\end{align*}
Hence, by using \eqref{E1p=1}, if $p=1$, we have 
\begin{equation*}
    \begin{split}
\|T_t(f)\|_{A^1} &\leq \left( B_1+CB_2t\log\left(e/t\right) +5 \|S^*\|_{A^1}^2 \frac{t}{(1-t)^2}\right) \|f\|_{A^1} \\
&\leq C\frac{1}{(1-t)^2} \|f\|_{A^1}.
\end{split}
\end{equation*}
By using \eqref{E1}, if $1<p<2$,
\begin{equation*}
    \begin{split}
\|T_t(f)\|_{A^p}^p &\leq 2^{p-1} \left( B_p+\frac{2^{5-2p}}{p-1}\|S^*\|_{A^p}^p \frac{t^{2-p}}{(1-t)^2}\right) \|f\|_{A^p}^p\ \\
&\leq C \frac{1}{(1-t)^2}  \|f\|_{A^p}^p
\end{split}
\end{equation*}
If $p=2$,
\begin{equation*}
    \begin{split}
\|T_t(f)\|_{A^2}^2 &\leq 2 \left( B_2\log(e/t)+ \|S^*\|_{A^2}^2 \frac{1}{(1-t)^2}\right) \|f\|^2_{A^2}\ \\
&\leq C \frac{\log(e/t)}{(1-t)^2}  \|f\|^2_{A^2} .
\end{split}
\end{equation*}
The proof of the Lemma is now complete.\\
\end{proof}
\vspace{11 pt}
\section{Proof of boundedness of $\Gamma_\mu$: $p\neq 2$}
\noindent
First of all, we note that $\Gamma_\mu$ is never contractive.
Indeed, if $ \Gamma_\mu$ is bounded in $A^p$, then, by \eqref{growth},
\begin{equation*}
  1=\Gamma_\mu(1)(0)\leq\|\Gamma_\mu(1)\|_{A^p}\leq \|\Gamma_\mu\|_{A^p}. 
\end{equation*}

\noindent 
We provide the estimate from below of $\|\Gamma_\mu\|_{A^p}$ in a separate lemma, which will also be used in the computation of the essential norm of $\Gamma_\mu$.

\begin{lem}\label{lem-fond}
Let $1\leq p<\infty$. If $\Gamma_\mu$ is bounded in $A^p$, then \begin{equation*}
    \int_0^1 \dfrac{t^{2/p-1}}{(1-t)^{2/p}}\,d\mu(t)\leq\|\Gamma_\mu\|_{A^p}.
\end{equation*}
\end{lem}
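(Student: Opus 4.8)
The plan is to test $\Gamma_\mu$ on the standard family $f_b(z)=(1-z)^{-b}$ with $0<b<2/p$, which lies in $A^p$ exactly because $bp<2$, and then to let $b\uparrow 2/p$. First I would combine the representation \eqref{E:integral representation} (available since boundedness of $\Gamma_\mu$ forces $\psi_p<\infty$) with the identity $1-\varphi_t(z)=\frac{(1-t)(1-z)}{1+(t-1)z}$ to obtain $T_t(f_b)(z)=\frac{(1+(t-1)z)^{b-1}}{(1-t)^b(1-z)^b}$, hence
\[
\Gamma_\mu(f_b)(z)=\frac{g_b(z)}{(1-z)^b},\qquad g_b(z):=\int_0^1\frac{(1+(t-1)z)^{b-1}}{(1-t)^b}\,d\mu(t).
\]
Since $\|f_b\|_{A^p}^p=\int_{\mathbb D}|1-z|^{-bp}\,dA$ and $\|\Gamma_\mu(f_b)\|_{A^p}^p=\int_{\mathbb D}|g_b(z)|^p\,|1-z|^{-bp}\,dA$, the guiding idea is that as $b\uparrow 2/p$ the mass of both integrals concentrates at $z=1$, where $g_b$ is essentially the constant $\int_0^1 t^{b-1}(1-t)^{-b}\,d\mu(t)$, whose limit is the quantity in the statement.

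The heart of the proof is a uniform lower bound for $|g_b|$ near $z=1$. Writing $1+(t-1)z=t+(1-t)(1-z)$ shows $\operatorname{Re}\bigl(1+(t-1)z\bigr)\ge t>0$ for $z\in\mathbb D$, so $1+(t-1)z$ lies in the right half-plane; since $|b-1|<1$, each term $\frac{(1+(t-1)z)^{b-1}}{(1-t)^b}$ then has argument in $(-\pi/2,\pi/2)$, hence nonnegative real part. Restricting $z$ to $|1-z|\le\delta$ and $t$ to $[\sqrt\delta,1)$ with $\delta$ small, one has $1+(t-1)z=t(1+\zeta)$ with $|\zeta|\le\sqrt\delta$, and therefore $\operatorname{Re}\bigl((1+(t-1)z)^{b-1}\bigr)\ge t^{b-1}(1-4\sqrt\delta)$; discarding the nonnegative contribution of $t\in[0,\sqrt\delta)$ yields, for all such $z$,
\[
|g_b(z)|\ \ge\ \operatorname{Re}g_b(z)\ \ge\ (1-4\sqrt\delta)\int_{\sqrt\delta}^1\frac{t^{b-1}}{(1-t)^b}\,d\mu(t).
\]
I expect this step — the uniform control of $g_b$ near the boundary point $z=1$, in particular the taming of the small-$t$ range — to be the main obstacle; the device that makes it painless is precisely to pass to real parts and exploit that every term of $g_b(z)$ lies in a common half-plane, so that the awkward range of $t$ may simply be thrown away.

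The remainder is bookkeeping. Restricting the integral defining $\|\Gamma_\mu(f_b)\|_{A^p}^p$ to $\{|1-z|<\delta\}\cap\mathbb D$ and dividing by $\|f_b\|_{A^p}^p$ gives
\[
\|\Gamma_\mu\|_{A^p}^p\ \ge\ (1-4\sqrt\delta)^p\Bigl(\int_{\sqrt\delta}^1\frac{t^{b-1}}{(1-t)^b}\,d\mu(t)\Bigr)^{\!p}\cdot\frac{\int_{\{|1-z|<\delta\}\cap\mathbb D}|1-z|^{-bp}\,dA}{\int_{\mathbb D}|1-z|^{-bp}\,dA}.
\]
As $bp\to 2^-$ the denominator tends to $\infty$ while the part of that integral taken over $\{|1-z|\ge\delta\}$ stays bounded by $\delta^{-2}$, so the last fraction tends to $1$; letting $b\uparrow 2/p$ along a sequence and applying Fatou's lemma then gives $\|\Gamma_\mu\|_{A^p}^p\ge(1-4\sqrt\delta)^p\bigl(\int_{\sqrt\delta}^1 t^{2/p-1}(1-t)^{-2/p}\,d\mu(t)\bigr)^p$. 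Finally, letting $\delta\downarrow 0$ and invoking monotone convergence yields $\|\Gamma_\mu\|_{A^p}\ge\int_0^1 t^{2/p-1}(1-t)^{-2/p}\,d\mu(t)$, which is the assertion.
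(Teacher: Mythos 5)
Your argument is correct and follows essentially the same route as the paper's proof: the test functions $(1-z)^{-b}$ with $b\uparrow 2/p$, the factorization $\Gamma_\mu(f_b)=g_b\,f_b$, the concentration of $\|f_b\|_{A^p}^p$ at $z=1$, a real-part lower bound for the multiplier, and Fatou's lemma. The only (harmless) variation is in how you bound $\operatorname{Re}\bigl((1+(t-1)z)^{b-1}\bigr)$ from below --- you discard $t<\sqrt{\delta}$ using the common half-plane observation and perturb around $t$, whereas the paper treats the cases $a<1$ and $a>1$ with separate algebraic estimates valid for all $t$.
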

\begin{proof}
Let $f_{a}(z)=1/(1-z)^{a}$ for $0<a<2/p$. We know that 
\begin{equation*}
    \lim_{a \to 2/p}\|f_a\|_{A^{p}}=\infty .   
\end{equation*}
We consider
\begin{equation*}
\begin{split}
    \Gamma_{\mu}(f_{a})(z)
    &=\int_{0}^{1}\frac{[1-(1-t)z]^{a-1}}{(1-t)^{a}}\,d\mu(t) \cdot f_{a}(z)=\Lambda_{a}(z)\cdot f_a(z) ,
\end{split}
\end{equation*}
where
\begin{equation*}
    \Lambda_{a}(z)=\int_{0}^{1}\frac{[1-(1-t)z]^{a-1}}{(1-t)^{a}}\,d\mu(t) .
\end{equation*}
Let $B_\varepsilon(1)$ be a circle of radius $\varepsilon$ centred at $1$. We have that 
\begin{align*}
\|\Gamma_\mu(f_a/\|f_a\|_{A^p})\|_{A^p}^p &\geq \int_{\mathbb D\cap B_\varepsilon(1)}\frac{|f_a(z)|^p}{\|f_a\|^p_{A^p}} \vert\Lambda_a(z)\vert^p \, dA(z)\\
&\geq \inf_{z\in \mathbb D\cap B_\varepsilon(1) } \vert\Lambda_a(z)\vert^p \cdot \int_{\mathbb D\cap B_\varepsilon(1)}\frac{|f_a(z)|^p}{\|f_a\|^p_{A^p}}\, dA(z)\\
&= \inf_{z\in \mathbb D\cap B_\varepsilon(1) } \vert\Lambda_a(z)\vert^p \cdot\left( 1-  \int_{\mathbb D\setminus B_\varepsilon(1)}\frac{|f_a(z)|^p}{\|f_a\|^p_{A^p}}\, dA(z)\right).
\end{align*}
Consequently, since $f_a/\|f_a\|_{A^p}$ has unitary norm in $A^p$ and 
$$
|f_a(z)|<\frac{1}{\varepsilon^a}\leq \frac{1}{\varepsilon^{2/p}} \text{ when } z \in \mathbb D\setminus B_\varepsilon(1) ,
$$ 
we find that 
\begin{align*}
    \|\Gamma_\mu\|^p_{A^p}\geq& \liminf_{\epsilon \to 0}\ \liminf_{a\to 2/p}\ \inf_{z\in \mathbb D\cap B_\varepsilon(1) } \vert\Lambda_a(z)\vert^p \cdot\left( 1-  \int_{\mathbb D\setminus B_\varepsilon(1)}\frac{|f_a(z)|^p}{\|f_a\|^p_{A^p}}\, dA(z)\right)\\
    =& \liminf_{\epsilon \to 0}\ \liminf_{a\to 2/p}\ \inf_{z\in \mathbb D\cap B_\varepsilon(1) } \vert\Lambda_a(z)\vert^p\\
     \geq &\left( \liminf_{\epsilon \to 0}\ \liminf_{a\to 2/p}\ \inf_{z\in \mathbb D\cap B_\varepsilon(1) } \Re \Lambda_a(z)\right)^p.
\end{align*}
Furthermore,
\begin{align}
    \inf_{z\in \mathbb D\cap B_\varepsilon(1) } \Re \Lambda_a(z)\geq \int_{0}^1\frac{1}{(1-t)^a} \inf_{z\in \mathbb D\cap B_\varepsilon(1) }\Re(1-(1-t)z)^{a-1}\,d\mu(t).
\end{align}
Let us consider $2\leq p<\infty$. Then, since $a<1$, we note that
\begin{align*}
&\inf_{z\in \mathbb D\cap B_\varepsilon(1) }\Re(1-(1-t)z)^{a-1}\\
&\quad =\inf_{z\in \mathbb D\cap B_\varepsilon(1) } \frac{1}{|1-(1-t){z}|^{2(1-a)}}\Re(1-(1-t)\overline{z})^{1-a}
\geq \frac{|t|^{1-\alpha}}{|t+\varepsilon|^{2(1-a)}} .
\end{align*}
On the other hand, if $1\leq p<2$, we consider $a>1$ and we get that
 \begin{align*}
\inf_{z\in \mathbb D\cap B_\varepsilon(1) }\Re(1-(1-t)z)^{a-1}
    &\geq t^{a-1} .
\end{align*}
Therefore, by taking the limits and applying Fatou's Lemma, we have that
\begin{equation*}
    \liminf_{\epsilon \to 0}\ \liminf_{a\to 2/p}\  \inf_{z\in \mathbb D\cap B_\varepsilon(1) }\Re \Lambda_a(z)\geq  \int_0^1 \dfrac{t^{2/p-1}}{(1-t)^{2/p}}\,d\mu(t) .
\end{equation*}
The statement of the lemma is now proved.\\
\end{proof}
\noindent 
We are now ready for the proof of Theorem \ref{main theom boundedness} when $p\neq 2$.

\begin{proof}[Proof of Theorem \ref{main theom boundedness} when $p \neq 2$]
We start with the sufficient condition. An application of the generalized Minkowski's inequality for the integrals together with Lemma \ref{L:estimates T_t} imply that for $1\leq p<\infty$
\begin{align*}
\nonumber \norm{\Gamma_\mu(f)}_{A^p} \, &\leq \,\int_0^1 \norm{T_t(f)}_{A^p} \,d\mu(t)
\leq B(p)\,\int_0^1\Theta_p(t)\,d\mu(t)\;\norm{f}_{A^p},
\end{align*}
 where $B(p)$ is equal to $C(p)$ from the proof of Lemma \ref{L:estimates T_t}.

\vspace{11 pt}\noindent 
We prove now the necessary condition.
We first consider $2<p<\infty$. Then Lemma \ref{lem-fond} implies that
\[
\|\Gamma_\mu\|_{A^p}\geq \int_0^1\Theta_p(t)\,d\mu(t) 
\]
from which the statement of the theorem follows. 

\vspace{11 pt}\noindent 
If $1\leq p<2$, we consider again Lemma \ref{lem-fond}, but we note that
\[
\begin{split}
\int_{0}^{1}\frac{1}{(1-t)^{2/p}} \,d\mu(t) & =\int_{0}^{1/2}\frac{1}{(1-t)^{2/p}}  \,d\mu(t) +\int_{1/2}^{1}\frac{1}{(1-t)^{2/p}} \,d\mu(t)
\\[0.1in]
&\leq 2^{2/p}\Gamma_\mu(1)(0) + 2^{2/p-1}\int_{1/2}^{1}\frac{t^{2/p-1}}{(1-t)^{2/p}}\,d\mu(t)\\
&\leq \left( 1+ \frac{1}{2}\right) 2^{2/p}\|\Gamma_{\mu}\|_{A^p}=3 \cdot 2^{2/p-1}\|\Gamma_\mu\|_{A^p}.
\end{split}
\]
Therefore 
\begin{align*}
  \|\Gamma_\mu\|_{A^p}&\geq\frac{1}{3\cdot 2^{2/p-1}}\int_{0}^{1}\frac{1}{(1-t)^{2/p}} \,d\mu(t)
  =A(p) \int_{0}^{1}\Theta_p(t) \,d\mu(t)  . 
\end{align*}
Theorem \ref{main theom boundedness} is now proved for $p\neq 2$.\\
\end{proof}
\vspace{11 pt}
\section{Proof of boundedness of $\Gamma_\mu$: $p=2$}
\noindent 
We start this section by computing $\|\Gamma_\mu(1)\|_{A^2}^2$. This quantity will be necessary for the conclusion of the proof of Theorem \ref{main theom boundedness}. We use some ideas inspired by \cite{Galanopoulos2010}.

\begin{lem}\label{L:estimate Gmu(1)}
Let $\mu$ be a probability measure on $[0,1)$. Then
\begin{equation*}\label{estimate Gammamu1p=2}
    \|\Gamma_\mu(1)\|_{A^2}^2\sim \int_0^1 \mu[0,t] \log\frac{e}{t} \,d\mu(t).
\end{equation*}
\end{lem}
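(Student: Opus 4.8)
The plan is to diagonalize the computation through the Taylor coefficients of $\Gamma_\mu(1)$ and reduce the squared norm to a double $\mu$-integral of an explicit, positive kernel. From \eqref{E:integral representation},
\[
\Gamma_\mu(1)(z)=\int_0^1 w_t(z)\,d\mu(t)=\int_0^1\frac{d\mu(t)}{1-(1-t)z},
\]
and expanding the geometric series (the interchange being justified exactly as in Proposition \ref{P:welldefinition}) shows that the $n$-th Taylor coefficient of $\Gamma_\mu(1)$ is $b_n=\int_0^1(1-t)^n\,d\mu(t)$. By \eqref{norm bergman series} and Tonelli,
\[
\|\Gamma_\mu(1)\|_{A^2}^2=\sum_{n\ge 0}\frac{b_n^2}{n+1}=\int_0^1\int_0^1\Big(\sum_{n\ge 0}\frac{[(1-t)(1-s)]^n}{n+1}\Big)\,d\mu(t)\,d\mu(s).
\]
Using $\sum_{n\ge0}x^n/(n+1)=-\log(1-x)/x$ on $[0,1)$ with $x=(1-t)(1-s)$, this becomes $\int_0^1\int_0^1 K(t,s)\,d\mu(t)\,d\mu(s)$, where
\[
K(t,s)=\frac{1}{(1-t)(1-s)}\,\log\frac{1}{1-(1-t)(1-s)}.
\]

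The crux is a uniform pointwise kernel estimate: I claim that $K(t,s)\sim\log\frac{e}{\max(t,s)}$ on all of $[0,1)^2$. By symmetry I may assume $s\le t$, so $\max(t,s)=t$, and substitute $u=1-t,\ v=1-s$, giving $0<u\le v\le 1$ and $K(t,s)=\phi(uv)$ with $\phi(x)=-\log(1-x)/x$, while the target is $h(u)=\log\frac{e}{1-u}=1-\log(1-u)$. Thus the claim reduces to $\phi(uv)\sim h(u)$. Since $\phi$ is increasing and $u^2\le uv\le u$, it suffices to prove $\phi(u)\sim h(u)$ and $\phi(u^2)\sim h(u)$ uniformly on $(0,1]$. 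Both follow by inspecting the two regimes: as $u\to 0^+$ each of $\phi(u),\phi(u^2),h(u)$ tends to $1$, while as $u\to 1^-$ each behaves like $-\log(1-u)$; since the relevant ratios are continuous and strictly positive on $(0,1)$ with positive finite limits at both endpoints, they are bounded above and below. This sandwich is where the main difficulty lies: near the corner $t=s=1$ the blow-up of $1/((1-t)(1-s))$ is exactly compensated by the vanishing of $\log\frac{1}{1-(1-t)(1-s)}$ (both of order $(1-t)(1-s)$), while near $t=s=0$ one has $K\sim\log(1/t)$, and producing one pair of constants valid simultaneously across both regimes is the delicate point. (Note that getting the correct power of $\max(t,s)$ here is exactly what determines whether a spurious factor such as $1/t$ appears.)

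Combining the two steps gives
\[
\|\Gamma_\mu(1)\|_{A^2}^2\sim\int_0^1\int_0^1\log\frac{e}{\max(t,s)}\,d\mu(t)\,d\mu(s),
\]
and it remains to symmetrize. Restricting to $\{s<t\}$, where $\max(t,s)=t$, immediately yields the lower bound $\int_0^1\mu[0,t]\log\frac{e}{t}\,d\mu(t)$. For the upper bound I would split $[0,1)^2$ into $\{s<t\}$, $\{s>t\}$ and the diagonal; the two off-diagonal pieces are symmetric and each equals $\int_0^1\mu[0,t]\log\frac{e}{t}\,d\mu(t)$, so the double integral is $2\int_0^1\mu[0,t]\log\frac{e}{t}\,d\mu(t)+\int_0^1\log\frac{e}{t}\,\mu\{t\}\,d\mu(t)$. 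Since both summands are nonnegative, the diagonal term only promotes the half-open $\mu[0,t]$ to the closed interval, which leaves the comparison intact up to constants; this establishes the asserted equivalence. The only bookkeeping that deserves a careful word is this diagonal contribution, which is genuinely present when $\mu$ has atoms, but it is absorbed into the implied constants of $\sim$.
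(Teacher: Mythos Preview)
Your approach coincides with the paper's: both expand $\Gamma_\mu(1)$ in Taylor series, use \eqref{norm bergman series} and Tonelli to write $\|\Gamma_\mu(1)\|_{A^2}^2$ as a double $\mu$-integral of the kernel $\sum_{n\ge 0}[(1-t)(1-s)]^n/(n+1)$, compare that kernel pointwise to $\log\frac{e}{\max(t,s)}$, and reduce by symmetry. The paper is marginally more direct---on the region $\{s\le t\}$ it simply sandwiches $\log\frac{e}{1-(1-t)(1-s)}$ between $\log\frac{e}{t(2-t)}$ and $\log\frac{e}{t}$---where you instead pass through the monotone function $\phi(x)=-\log(1-x)/x$ and the chain $u^2\le uv\le u$; the content is identical.

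One caveat, which the paper's proof also glosses over: your final claim that the diagonal contribution ``is absorbed into the implied constants of $\sim$'' is not correct under the stated half-open convention $\mu[0,t]=\mu([0,t))$. For $\mu=\delta_a$ with $a\in(0,1)$ one has $\int_0^1\mu[0,t]\log(e/t)\,d\mu(t)=\mu([0,a))\log(e/a)=0$ while $\|\Gamma_{\delta_a}(1)\|_{A^2}^2>0$, so the equivalence as literally stated fails for atomic $\mu$. Your observation that the diagonal ``promotes the half-open $\mu[0,t]$ to the closed interval'' is exactly right and is in fact the fix, not something absorbable: the lemma is true in general only with $\mu[0,t]$ read as $\mu([0,t])$. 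The paper's own symmetry step $\|\Gamma_\mu(1)\|_{A^2}^2\le 2\int_0^1\int_0^t\log\frac{e}{t}\,d\mu(s)\,d\mu(t)$ has the same hidden assumption.
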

\begin{proof}
We note that
\[
\Gamma_\mu(1)(z)=\int_0^1\frac{1}{1-(1-t)z}\,d\mu(t)=\sum_{n\geq 0}\int_{0}^1 (1-t)^n \,d\mu(t) \ z^n.
\]
Hence, by \eqref{norm bergman series}, we obtain that
\begin{align*}
\|\Gamma_\mu(1)\|_{A^2}^2=&\sum_{n\geq 0} \frac{\int_{0}^1 (1-t)^n \,d\mu(t)\int_{0}^1 (1-s)^n \,d\mu(s)}{n+1}\\
=&\int_0^1 \int_0^1 \sum_{n\geq 0}\frac{[(1-t)(1-s)]^n}{n+1}\,d\mu(t)\,d\mu(s)\\
=& \int_0^1 \int_0^1 1+\sum_{n\geq 1}\frac{\{(1-t)(1-s)\}^n}
{n+1}\,d\mu(t)\,d\mu(s)\\
\sim& \int_0^1 \int_0^t \log\frac{e}{1-(1-t)(1-s)} \,d\mu(s)\,d\mu(t).
\end{align*}
More precisely,
\[
\frac{1}{2} \norm{\Gamma_\mu(1)}_{A^p}^2  
\leq
\int_0^1 \int_0^t \log\frac{e}{1-(1-t)(1-s)} \,d\mu(s)\,d\mu(t)
\leq
\norm{\Gamma_\mu(1)}_{A^p}^2.  
\]
Consequently,
\begin{align*}
\|\Gamma_\mu(1)\|_{A^2}^2&\geq  \int_0^1 \int_0^t \log\frac{e}{1-(1-t)^2} \,d\mu(s)\,d\mu(t)\\
&=\int_0^1 \mu[0,t]\log\frac{e}{t(2-t)}\,d\mu(t)\\
&= \int_0^1 \mu[0,t] \log\frac{e}{t} \left( 1-  \frac{\log(2-t)}{\log\left(e/t\right)}\right)\,d\mu(t)\\
&\geq (1-\log(2))\int_0^1 \mu[0,t] \log\frac{e}{t} \,d\mu(t)
\end{align*}
and
\begin{align*}
\|\Gamma_\mu(1)\|_{A^2}^2&\leq 2  \int_0^1 \int_0^t \log\frac{e}{1-(1-t)} \,d\mu(s)\,d\mu(t)= 2 \int_0^1  \mu[0,t]\log\frac{e}{t}\,d\mu(t),
\end{align*}
which proves the statement of the lemma.\\
\end{proof}
\noindent
We are now ready to prove Theorem \ref{main theom boundedness} for $p=2$.
\begin{proof}[Proof of Theorem \ref{main theom boundedness} when $p=2$]
We recall that in Lemma \ref{lem-fond}, we have already verified that
$$
   \|\Gamma_\mu\|_{A^2}\geq   \int_0^1 \dfrac{1}{1-t}\,d\mu(t).
$$
Therefore, if $\Gamma_\mu$ is bounded in $A^2$, we obtain that
\begin{align*}
&\quad \int_0^1 \mu[0,t]\log\left(\frac{e}{t}\right)\,d\mu(t)+\left(\int_{0}^{1}\frac{1}{1-t}\,d\mu(t)\right)^{2}\\
&\lesssim \|\Gamma_\mu(1)\|_{A^2}^2 +  \|\Gamma_\mu\|_{A^2}^{2}
\lesssim \|\Gamma_\mu\|_{A^2}^2.
\end{align*}
On the other hand, by using Lemma \ref{L:estimates T_t} and Lemma \ref{L:estimate Gmu(1)}, we note that
\begin{small}
\begin{align*}
    \|\Gamma_\mu(f)\|_{A^2}^2\leq &2\|\Gamma_\mu(f(0))\|_{A^2}^2+2\|\Gamma_\mu(f-f(0))\|_{A^2}^2\\
    \leq &2|f(0)|^2 \|\Gamma_\mu(1)\|_{A^2}^2+2\left( \int_{0}^1\|T_t(f-f(0))\|_{A^2}\,d\mu(t)\right)^2\\
    \leq & 4 \|f\|_{A^2}^2 \int_0^1 \mu[0,t] \log\left(\frac{e}{t}\right) \,d\mu(t) +2\|S^*\|^2_{A^2} \|f\|_{A^2}^2\left(\int_0^1\frac{1}{1-t}\,d\mu(t)\right)^2.
\end{align*}
\end{small}
Therefore, by fixing $B(2)=\sqrt{2\max(2,\|S^*\|_{A^2}^2)}$, we have that 
\begin{align*}
\|\Gamma_\mu(f)\|_{A^2}\leq B(2)\|f\|_{A^2} \ \sqrt{\int_0^1 \mu[0,t]\log\left(\frac{e}{t}\right)\,d\mu(t)+\left(\int_0^1\frac{1}{1-t}\,d\mu(t)\right)^{2}}
\end{align*}
from which the estimate from above follows.\\
\end{proof}
\noindent 
First of all, we highlight that the the first term in the definition of $\Theta_2(t)$ is fundamental. Indeed, if $\mu=\delta_0$, then $\Gamma_{\delta_0}$ is unbounded in $A^2$ but 
$$
\int_{0}^1\dfrac{1}{1-t} \, d\delta_0(t)<\infty.
$$

\vspace{11 pt}
\noindent
It is worth mentioning that the operator $\Gamma_\mu$ is bounded from the closed subspace $A^{2}_{0}=\{f\in A^2: f(0)=0\}$ to $A^2$ if and only if
$$
\int_{0}^{1}\frac{1}{1-t}\,d\mu(t)<\infty. 
$$
This result is in accordance with the other values of $p$ and it shows that it is $\|\Gamma_{\mu}(1)\|_{A^2}$ that has an unexpected behavior. 
We also note that $\Gamma_\mu(f)(0)=0$ is seldom true, even if we choose $f \in A^2_0$.

\vspace{11 pt}
\noindent
When $p\not=2$, an application of the generalized Minkowski's inequality is enough to provide the "correct" estimate for the norm of   $\Gamma_{\mu}$. For this reason, it is tempting to consider the condition
\begin{equation}\label{wrong condition}
 \,\,\int_{0}^{1}\sqrt{\log(e/t)}\,d\mu(t)<\infty,  
\end{equation}
which captures the behavior of 
\begin{equation*}\label{wrong condition2}
 \,\,\int_{0}^{1}\frac{\sqrt{\log(e/t)}}{1-t}\,d\mu(t) 
\end{equation*}
when $t$ is close to $0$, see Lemma \ref{L:estimates T_t}, instead of
\begin{equation*}
\sqrt{\int_{0}^{1}\mu[0,t]\log(e/t)\,d\mu(t)}<\infty.
\end{equation*}
However,  \eqref{wrong condition} is not a necessary condition for the boundedness of $\Gamma_\mu$ on $A^2$.

\begin{prop}\label{Prop_counter}
There exists a positive, absolutely continuous measure $\mu$ such that \eqref{wrong condition} is not satisfied but $\Gamma_\mu$ is bounded on $A^2$.
\end{prop}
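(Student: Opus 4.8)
The plan is to invoke the characterization of boundedness on $A^2$ contained in Theorem \ref{main theom boundedness} (case $p=2$): $\Gamma_\mu$ is bounded on $A^2$ if and only if $\int_0^1 \mu[0,t]\log(e/t)\,d\mu(t)<\infty$ and $\int_0^1 (1-t)^{-1}\,d\mu(t)<\infty$. Accordingly I would look for an absolutely continuous probability measure $\mu$ whose mass is concentrated near $t=0$ and which is supported in $[0,1/2]$; the support condition makes $\int_0^1(1-t)^{-1}\,d\mu(t)\le 2$ automatic, so the entire problem reduces to arranging, near $t=0$, that the cumulative integral $\int_0 \mu[0,t]\log(e/t)\,d\mu(t)$ converges while $\int_0 \sqrt{\log(e/t)}\,d\mu(t)$ — that is, condition \eqref{wrong condition} — diverges.

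The heuristic organizing the construction is integration by parts in the variable $H(t):=\mu[0,t]$. Writing $d\mu=h\,dt$ with $h=H'$ near $0$, one sees that $\int_0 \mu[0,t]\log(e/t)\,d\mu(t)=\frac12\int_0 \log(e/t)\,d(H^2)$ is comparable, up to boundary terms, to $\int_0 \frac{H(t)^2}{t}\,dt$, while $\int_0 \sqrt{\log(e/t)}\,dH$ is comparable to $\int_0 \frac{H(t)}{2t\sqrt{\log(e/t)}}\,dt$. So I need an increasing $H$ with $H(0^+)=0$ for which $\int_0 \frac{H(t)^2}{t}\,dt<\infty$ but $\int_0 \frac{H(t)}{t\sqrt{\log(1/t)}}\,dt=\infty$. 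Pure powers $H(t)=(\log(1/t))^{-\beta}$ cannot achieve both (the first forces $\beta>\frac12$, the second forces $\beta\le\frac12$), which compels an iterated-logarithm correction. Concretely I would fix $t_0\in(0,e^{-e})$, set
\[
H(t)=\frac{1}{\sqrt{\log(1/t)}\,\log\log(1/t)},\qquad 0<t\le t_0,
\]
check that $H$ is increasing on $(0,t_0)$ with $H(0^+)=0$, put $h:=H'>0$ there, extend $h$ to an arbitrary nonnegative integrable function on $(0,1)$ supported in $(0,1/2)$, and normalize $d\mu=c\,h\,dt$ to a probability measure; positivity and absolute continuity are then immediate.

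The verification is a single substitution $u=\log(1/t)$. On one hand $\int_0^{t_0}\frac{H(t)^2}{t}\,dt=\int_{\log(1/t_0)}^{\infty}\frac{du}{u(\log u)^2}<\infty$, and integration by parts — the boundary term at $0$ vanishing since $\log(e/t)H(t)^2=\frac{1+u}{u}\cdot\frac{1}{(\log u)^2}\to 0$ — gives $\int_0^{t_0}H(t)\log(e/t)\,dH(t)<\infty$; the part over $[t_0,1/2)$ is trivially finite because there $\mu[0,t]$, $\log(e/t)$ and $h$ are all bounded, so the first boundedness condition holds. On the other hand, integrating $\int_0^{t_0}\sqrt{\log(e/t)}\,dH$ by parts — the boundary term at $0$ vanishing since $\sqrt{\log(e/t)}\,H(t)=\frac{\sqrt{1+u}}{\sqrt u}\cdot\frac{1}{\log u}\to 0$ — leaves $\int_0^{t_0}\frac{H(t)}{2t\sqrt{\log(e/t)}}\,dt$, which under the substitution becomes a positive constant times $\int_{\log(1/t_0)}^{\infty}\frac{du}{u\log u}=\infty$. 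Hence $\int_0^1\sqrt{\log(e/t)}\,d\mu(t)=\infty$, so \eqref{wrong condition} fails, while Theorem \ref{main theom boundedness} shows $\Gamma_\mu$ is bounded on $A^2$.

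The only genuinely delicate point is the choice of $H$: one must land strictly inside the gap between membership in $L^2(dt/t)$ and the weaker divergence of $\int H/(t\sqrt{\log(1/t)})$, which rules out every power-type profile and forces the $\log\log$ correction; once that profile is in place, and once one checks that the two boundary terms in the integrations by parts vanish, everything else is routine bookkeeping.
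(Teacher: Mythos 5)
Your proposal is correct and is essentially the paper's own construction: the distribution function $H(t)=\mu[0,t]\sim(\log(e/t))^{-1/2}(\log\log(e/t))^{-1}$ supported in $[0,1/2]$ is exactly the measure the paper writes down (the paper's density is precisely $H'$ for this $H$), and your verification via the substitution $u=\log(1/t)$ and integration by parts, with the two vanishing boundary terms checked, matches the paper's direct computation with the explicit density.
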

\begin{proof}
We consider the measure
$$
d\mu(t)=C\left(\frac{1}{2} \frac{1}{\log\left(\frac{e}{t}\right)^{\frac{3}{2}}}\frac{1}{\log\left(\log\left(\frac{e}{t}\right)\right)}\frac{1}{t}+ \frac{1}{\log\left(\frac{e}{t}\right)^{\frac{3}{2}}}\frac{1}{\log\left(\log\left(\frac{e}{t}\right)\right)^2}\frac{1}{t}\right)\chi_{[0,\frac{1}{2}]}(t) \, dt,
$$
where 
$$
C=\log \left( 2e\right)^\frac{1}{2}\log\left(\log 2e\right).
$$
It follows that 
$$
\mu[0,t]=\begin{cases}
C\log\left(\frac{e}{t}\right)^{-\frac{1}{2}}\frac{1}{\log\left(\log\left( \frac{e}{t}\right)\right)},\qquad &\text{if } 0<t\leq \frac{1}{2};\\[0.2in]
1, \qquad &\text{if } \frac{1}{2}<t\leq 1 .
\end{cases}
$$
We note that 
\begin{align*}
\int_{0}^{1}\sqrt{\log(e/t)}\,d\mu(t)> \frac{C}{2}\int_0^{\frac{1}{2}} \frac{1}{\log\left(\frac{e}{t}\right)}\frac{1}{\log\left(\log\left(\frac{e}{t}\right)\right)}\frac{1}{t} \, dt = \infty,         
\end{align*}
even if $\Gamma_\mu$ is bounded on $A^2$ since the condition of Theorem \ref{main theom boundedness} is satisfied. Indeed 
\begin{align*}
&\sqrt{\int_0^1 \mu[0,t]\log\left(\frac{e}{t}\right)\,d\mu(t)+\left(\int_0^1\frac{1}{1-t}\,d\mu(t)\right)^{2}}\\
&\quad \leq \sqrt{ C^2\int_0^{\frac{1}{2}} \frac{1}{t}\frac{1}{\log\left(\frac{e}{t}\right)}\frac{1}{\log\left(\log\left( \frac{e}{t}\right)\right)^2}\left(1+\frac{1}{\log\left(\log\left( \frac{e}{t}\right)\right)}\right)  +4} <\infty .        
\end{align*}
\end{proof}
\CB
\noindent
Before concluding this section, we provide another necessary condition for the boundedness of $\Gamma_\mu$ on $A^2$. To formulate it, we consider the adjoint of $\Gamma_\mu$ acting on the classical Dirichlet space $\mathcal{D}$.

\begin{lem}\label{the adjoint Gm}
Let $p=2$. If $\Gamma_\mu$ is bounded on $A^2$, then the adjoint operator of $\Gamma_{\mu}$ on $A^2$, denoted by $\Gamma_{\mu}^{*}$, is given by
$$
\Gamma_{\mu}^{*}(f)(z)=\int_{0}^{1}T_{t}^{*}(f)(z)\,d\mu(t)=\int_{0}^{1}T_{1-t}(f)(z)\,d\mu(t) 
$$
for every $f$ in the Dirichlet space $\mathcal{D}$.
\end{lem}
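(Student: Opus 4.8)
The plan is to identify the adjoint by a direct duality computation and then recognize the resulting operator as an integral average of the operators $T_{1-t}$. First I would recall that the reproducing kernel of $A^2$ is $K_w(z) = 1/(1-\bar w z)^2$ and that, since $\Gamma_\mu$ is bounded on $A^2$, the integral representation \eqref{E:integral representation} together with Fubini (justified by the Minkowski/Lemma \ref{L:estimates T_t} bounds already established) gives, for $f,g \in A^2$,
\[
\langle \Gamma_\mu f, g\rangle_{A^2} = \int_0^1 \langle T_t f, g\rangle_{A^2}\, d\mu(t),
\]
so it suffices to compute $T_t^{*}$ on a dense subset. The key observation is that $\varphi_t$ is an involutive-type Möbius self-map: a short computation shows $\varphi_{1-t}\circ\varphi_t = \mathrm{id}$ and, more to the point, the weighted composition operator $T_t = w_t\cdot(\,\cdot\circ\varphi_t)$ has the property that $\langle T_t f, g\rangle_{A^2} = \langle f, T_{1-t} g\rangle_{\mathcal D}$, where the pairing on the right is the Dirichlet pairing $\langle h_1,h_2\rangle_{\mathcal D} = h_1(0)\overline{h_2(0)} + \int_{\mathbb D} h_1'\overline{h_2'}\,dA$ — indeed the $A^2$–$\mathcal D$ duality under the Cauchy pairing is exactly what converts the weight $w_t(z) = 1/(1+(t-1)z)$ and the symbol $\varphi_t$ into $w_{1-t}$ and $\varphi_{1-t}$, because the change of variables $w = \varphi_t(z)$ has Jacobian producing the factor $|\varphi_t'|^2$ which matches the pairing of derivatives. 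I would carry this out first on kernel functions $K_w$ (or on the polynomials $z^n$), where everything is an explicit rational computation, and then extend by density using the boundedness of $\Gamma_\mu$ on $A^2$ and of $T_{1-t}$ on $\mathcal D$ (the latter following from the Forelli–Rudin-type estimates, noting $\varphi_{1-t}(\mathbb D)$ is a disc internally tangent at $-1$, or at $1$ after the natural rotation).

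The second step is to integrate: from $\langle T_t f,g\rangle_{A^2} = \langle f, T_{1-t}g\rangle$ one gets $\langle \Gamma_\mu f, g\rangle_{A^2} = \langle f, \int_0^1 T_{1-t}g\, d\mu(t)\rangle$, and since this holds for all $f\in A^2$ and the right factor lies in $\mathcal D$ (hence in $A^2$, the space against which the $A^2$ pairing is taken), we conclude $\Gamma_\mu^{*}g = \int_0^1 T_{1-t}g\,d\mu(t)$ for $g\in\mathcal D$. One must check that this last integral converges in $\mathcal D$ (or at least in $A^2$) for $g\in\mathcal D$: here the point is that $\|T_{1-t}g\|$ is controlled by something like $1/t^{2}$ near $t=0$ but paired against $d\mu(t)$ and tested only on $g\in\mathcal D$, for which the relevant quantity is integrable — this is really the same $\psi_p$-type finiteness that Proposition \ref{P:welldefinition} exploits, transported to the dual side.

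The main obstacle I expect is the bookkeeping in the change-of-variables identity $\langle T_t f,g\rangle_{A^2} = \langle f, T_{1-t} g\rangle_{\mathcal D}$: one has to track the weight, the Jacobian $|\varphi_t'|^2 = t^2/\bigl((1-t)^2|w|^4\bigr)\cdot(\text{stuff})$, and the fact that the natural pairing witnessing $A^2$–$\mathcal D$ duality is the Cauchy (not the $A^2$) pairing, so the statement is really about two different pairings being intertwined by $T_t\leftrightarrow T_{1-t}$. Getting the indices and the conjugations right, and making sure the integrability on $[0,1)$ lets Fubini through, is where the care is needed; the algebra itself, once organized around the kernel functions $K_w$, is routine.
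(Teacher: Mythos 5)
Your central identity is stated with the wrong pairings, and as written it is false. You claim $\langle T_t f, g\rangle_{A^2} = \langle f, T_{1-t}g\rangle_{\mathcal D}$ with the Dirichlet inner product on the right. Test it on $f=1$, $g=z$: since $T_t(1)=w_t=\sum_n(1-t)^nz^n$, the left side equals $(1-t)\|z\|_{A^2}^2=(1-t)/2$, while the right side equals $\overline{T_{1-t}(z)(0)}=\varphi_{1-t}(0)w_{1-t}(0)=1-t$. The pairings $\langle f,g\rangle_{A^2}=\sum \hat f(n)\overline{\hat g(n)}/(n+1)$ and $\langle f,G\rangle_{\mathcal D}=\hat f(0)\overline{\hat G(0)}+\sum_{n\ge1} n\hat f(n)\overline{\hat G(n)}$ are not intertwined by the identity map, so no bookkeeping of Jacobians will rescue the computation. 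The lemma concerns the Banach-space adjoint after identifying $(A^2)^*$ with $\mathcal D$ via the \emph{Cauchy} pairing $\langle f,G\rangle_c=\sum\hat f(n)\overline{\hat G(n)}$, and the identity that actually holds is $\langle T_t f,G\rangle_c=\langle f,T_{1-t}G\rangle_c$, with the \emph{same} pairing on both sides; this is Proposition 1 of \cite{Bellavita2023} (it is the $H^2$ adjoint relation), which the paper imports, combines with Fubini, and then extends by density of the Taylor partial sums in $A^2$ and $\mathcal D$. You flag the tension yourself ("two different pairings being intertwined"), but you never resolve it, and the explicit check on monomials or kernels that you propose would expose the mismatch rather than complete the proof. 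Note also that the $A^2$ Hilbert-space adjoint of $T_t$ is \emph{not} $T_{1-t}$ (it is $T_{1-t}$ conjugated by the diagonal map $\hat f(n)\mapsto(n+1)\hat f(n)$), so the Cauchy-pairing reading is the only one under which the lemma is true.

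A secondary error: $\varphi_{1-t}\circ\varphi_t\neq\mathrm{id}$. Each $\varphi_s(\mathbb D)$ is a proper subdisc of $\mathbb D$ internally tangent to $\mathbb T$ at $1$ (not at $-1$), so $\varphi_t$ admits no left inverse among self-maps of $\mathbb D$; concretely $\varphi_{1-t}(\varphi_t(0))=\varphi_{1-t}(t)=(1-t)/(1-t^2)=1/(1+t)\neq0$. The relation $T_t^*=T_{1-t}$ is a fact about the Cauchy pairing (a boundary/contour computation carried out in the Hardy-space predecessor paper), not a consequence of an involutive symmetry of the maps $\varphi_t$. Your overall architecture --- reduce to a pointwise-in-$t$ adjoint identity, integrate in $t$, justify Fubini and the convergence of $\int_0^1 T_{1-t}g\,d\mu(t)$, extend by density --- does match the paper's, but the pointwise identity you plan to establish is not the correct one, and that is the heart of the lemma.
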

\begin{proof}
First of all if $f(z)=\sum_{n}a_nz^n$ and $g(z)=\sum_n b_nz^n$ belong to $A^2$, then
$$
\left\langle f,g\right\rangle=\sum_n \dfrac{a_n\overline{b_n}}{n+1}=\sum_{n} a_n \overline{c_n}=\left\langle f,G\right\rangle_c
$$
where $c_n=b_n/(n+1)$ and $G(z)=\sum_n c_n z^n$ belongs to the Dirichlet space $\mathcal{D}$. Since also the reverse inclusion holds, we can identify the dual of $A^2$ with $\mathcal{D}$ by using the Cauchy pairing. 

\vspace{11 pt}
\noindent 
We first assume that $f(z)$ and $G(z)$ are polynomials. By using Proposition 1 of \cite{Bellavita2023} and Fubini's Theorem, we obtain that
\begin{align*}
    \left\langle \Gamma_\mu(f),G\right\rangle_c &=
    \int_{\mathbb T} \Gamma_\mu (f) (\zeta) \overline{G(\zeta)}|d\zeta |
    =\,\int_{0}^1 \int_{\mathbb T} T_t (f) (\zeta)\overline{G(\zeta)}| d\zeta |  \,d\mu(t)\\
     &=\, \int_{0}^1 \left\langle T_t(f), G\right\rangle_c  \,d\mu(t)=\, \int_{0}^1 \left\langle f, T_{1-t}(G)\right\rangle_c  \,d\mu(t)\\
    &=\, \int_{\mathbb T}f(\zeta) \overline{ \left( \int_{0}^1 {T_{1-t} (G) (\zeta)} \,d\mu(t) \right) } | d\zeta | = \left\langle f,\Gamma^*_\mu\left(G\right)\right\rangle_c .
\end{align*}
Since the partial Taylor sums are dense in $A^2$ and $\mathcal{D}$, we have that
\begin{align*}
    \left\langle \Gamma_\mu(f),G\right\rangle_c &=\lim_{n \to \infty} \left\langle \Gamma_\mu(S_n(f)),S_n(G)\right\rangle_c\\
    &=\lim_{n \to \infty} \left\langle S_n(f),\Gamma^*_\mu(S_n(G))\right\rangle_c\\
    &= \left\langle f,\Gamma^*_\mu(G)\right\rangle_c,
\end{align*}
which concludes the proof.\\
\end{proof}

\begin{prop}
Let $p=2$. If $\Gamma_{\mu}$ is bounded on $A^2$, then for every $0<a<1$, we have that
$$
\int_{0}^1\left(\log\frac{e}{t}\right)^{a/2}\,d\mu(t)<\infty.
$$    
\end{prop}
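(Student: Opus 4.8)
The plan is to exploit the adjoint formula from Lemma~\ref{the adjoint Gm}. Since $\Gamma_\mu$ is bounded on $A^2$, so is $\Gamma_\mu^*$ on the dual, which we identify with the Dirichlet space $\mathcal{D}$ via the Cauchy pairing, and by Lemma~\ref{the adjoint Gm} we have $\Gamma_\mu^*(f)=\int_0^1 T_{1-t}(f)\,d\mu(t)$ for $f\in\mathcal D$. The idea is to apply $\Gamma_\mu^*$ to the constant function $1$ and evaluate a suitable linear functional at the result; equivalently, pair $\Gamma_\mu(f_a)$ with a fixed vector, where $f_a$ is a test function whose $A^2$-norm is uniformly controlled as $a\to 1^-$. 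Concretely, I would test with $g_a(z)=1/(1-z)^{a}$ for $0<a<1$: these lie in $A^2$ with $\|g_a\|_{A^2}^2\sim 1/(2-2a)$ remaining bounded away from the origin only mildly — in fact one should rescale so that the relevant functional stays bounded.

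More precisely, first I would compute $\Gamma_\mu^*(1)(z)=\int_0^1 T_{1-t}(1)(z)\,d\mu(t)=\int_0^1 \dfrac{1}{1+((1-t)-1)z}\,d\mu(t)=\int_0^1\dfrac{1}{1-tz}\,d\mu(t)$. Boundedness of $\Gamma_\mu^*$ on $\mathcal D$ together with the reproducing structure of $\mathcal D$ gives that $\|\Gamma_\mu^*(1)\|_{\mathcal D}\lesssim \|1\|_{\mathcal D}$; since $\|1\|_{\mathcal D}$ is finite, $\Gamma_\mu^*(1)\in\mathcal D$. Writing $\Gamma_\mu^*(1)(z)=\sum_{n\ge 0}\mu_n z^n$ where $\mu_n=\int_0^1 t^n\,d\mu(t)$ are the Hausdorff moments, the Dirichlet norm bound reads $\sum_{n\ge 1} n\,\mu_n^2<\infty$. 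Then I would dualize \emph{this} statement: by the same reasoning applied to $\Gamma_\mu^*$ paired against the test functions $g_a$, or directly by estimating $\sum_{n\ge 1} n\,\mu_n^2$ from below in terms of $\int_0^1(\log(e/t))^{a/2}\,d\mu(t)$ using that $\mu_n\ge c\,\mu[0,1/n]$ and a Cauchy--Schwarz / Abel-summation comparison of $\sum n\,\mu[0,1/n]^2$ with the tail behaviour of $\mu$ near $0$. The key elementary inequality is that for $0<a<1$,
\[
\int_0^1\Bigl(\log\frac{e}{t}\Bigr)^{a/2}\,d\mu(t)\ \lesssim\ \sum_{n\ge 1} n^{a/2-1}\,\mu[0,1/n]\ \lesssim\ \Bigl(\sum_{n\ge 1} n\,\mu[0,1/n]^2\Bigr)^{1/2}\Bigl(\sum_{n\ge 1} n^{a-3}\Bigr)^{1/2},
\]
where the last sum converges precisely because $a-3<-1$; the middle step is Cauchy--Schwarz and the first is a standard layer-cake comparison of the integral $\int_0^1(\log(e/t))^{a/2}d\mu(t)=\int_0^\infty \mu\{t:(\log(e/t))^{a/2}>\lambda\}\,d\lambda$ with the dyadic-type sum. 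Combining with $\sum n\,\mu[0,1/n]^2\lesssim \sum n\,\mu_n^2=\|\Gamma_\mu^*(1)\|_{\mathcal D}^2-1<\infty$ finishes the proof.

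The main obstacle I anticipate is the passage from the pointwise/functional bound to the moment inequality $\sum n\,\mu_n^2<\infty$ being genuinely equivalent to a clean condition on $\mu$ near $0$: one must justify carefully that $\mu_n\gtrsim \mu[0,1/n]$ (immediate: $\mu_n\ge\int_0^{1/n}t^n\,d\mu\ge$ nothing useful that way — rather use $\mu_n=\int_0^1 t^n d\mu\ge (1-1/n)^{?}\cdots$; in fact the clean direction is $\mu_n\le 1$ and $\sum n\mu_n^2<\infty\Rightarrow$ smallness of the tail, so I would instead argue $\int_0^1(\log(e/t))^{a/2}d\mu(t)\lesssim\sum_{n}n^{a/2-1}\mu_n$ directly by expanding $(\log(e/t))^{a/2}\lesssim \sum_n n^{a/2-1}t^n$ up to constants and integrating term by term), and then Cauchy--Schwarz against $\sum n^{a-3}<\infty$. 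The only real care needed is verifying the elementary bound $(\log(e/t))^{a/2}\le C_a\sum_{n\ge 1} n^{a/2-1}t^n$ for $0<t<1$, which follows from comparing with $\int_1^\infty x^{a/2-1}t^x\,dx$ and a change of variables; once that is in hand the proof is short.
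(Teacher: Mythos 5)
Your opening move (pass to the adjoint on the Dirichlet space via Lemma~\ref{the adjoint Gm}) is the same as the paper's, and the intermediate fact you extract is correct: $\Gamma_\mu^*(1)(z)=\int_0^1(1-tz)^{-1}\,d\mu(t)=\sum_{n\ge0}\mu_n z^n$, so boundedness forces $\sum_{n\ge1}n\mu_n^2<\infty$. But the step from there to the stated conclusion fails, and not just technically. Your ``key elementary inequality'' $(\log(e/t))^{a/2}\le C_a\sum_{n\ge1}n^{a/2-1}t^n$ is false precisely where it is needed: as $t\to0^+$ the left-hand side tends to $+\infty$ while the right-hand side is $\sim t$. (The series $\sum_n n^{s-1}t^n$ is comparable to $(1-t)^{-s}$ near $t=1$ and detects singularities of $\mu$ at $t=1$, whereas the integrand $(\log(e/t))^{a/2}$ is singular only at $t=0$.) More fundamentally, the moments $\mu_n$ carry essentially no information about $\mu$ near $0$ — note $\mu_n\ge 2^{-n}\mu[1/2,1)$ but mass near $t=0$ contributes $O(\epsilon^n)$ to $\mu_n$ — so no inequality of the form $\int_0^1(\log(e/t))^{a/2}\,d\mu\lesssim F\bigl(\sum_n n\mu_n^2\bigr)$ can hold: for $\mu=\delta_0$ one has $\sum_{n\ge1}n\mu_n^2=0$ while $\int_0^1(\log(e/t))^{a/2}\,d\delta_0=+\infty$. (That measure does not give a bounded $\Gamma_\mu$, so it does not contradict the proposition, but it does break the link in your chain of implications.) Testing the adjoint on the constant function $1$ simply sees the wrong end of the interval.

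The repair is to test $\Gamma_\mu^*$ on a Dirichlet function whose boundary values at $1-t$ reproduce the desired integrand. The paper takes $f_a(z)=\bigl(\log\frac{e}{1-z}\bigr)^{a/2}$, which lies in $\mathcal D$ exactly because $a/2<1/2$. Since $T_{1-t}(g)(0)=g(1-t)$, Lemma~\ref{the adjoint Gm} gives
\begin{equation*}
\Gamma_\mu^*(f_a)(0)=\int_0^1 f_a(1-t)\,d\mu(t)=\int_0^1\Bigl(\log\frac{e}{t}\Bigr)^{a/2}\,d\mu(t),
\end{equation*}
and then $|\Gamma_\mu^*(f_a)(0)|\le\|\Gamma_\mu^*(f_a)\|_{\mathcal D}\lesssim\|\Gamma_\mu\|_{A^2}\,\|f_a\|_{\mathcal D}<\infty$ finishes the argument. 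Your computation that $\sum_n n\mu_n^2<\infty$ is a genuine (different) necessary condition, but it is not the one asserted here.
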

\begin{proof}
We note that $f_a(z):=\log\left(e/(1-z)\right)^{a/2} \in \mathcal{D}$. Therefore, according to Lemma \ref{the adjoint Gm},
\begin{align*}
 \|\Gamma_\mu\|_{A^2}&\sim\|\Gamma_\mu^*\|_{\mathcal{D}}\geq \frac{1}{\|f_a\|_{\mathcal{D}}}\|\Gamma^*_\mu(f_a)\|_{\mathcal{D}}\\
 &\geq \frac{1}{\|f_a\|_{\mathcal{D}}}|\Gamma^*_\mu(f_a)(0)|\\
  &=\frac{1}{\|f_a\|_{\mathcal{D}}}\int_{0}^1\left(\log\frac{e}{t}\right)^{a/2}\,d\mu(t),
\end{align*}
from which the statement follows.\\
\end{proof}

\noindent
We point out that, by monotone convergence, the condition
$$
\lim_{a \to 1} \int_{0}^1\left(\log\frac{e}{t}\right)^{a/2}\,d\mu(t)<\infty 
$$ is exactly \eqref{wrong condition}. Even if we can say that for every $a<1$ this integral is finite, we do not have an uniform bound, since the quantity $1/{\|f_a\|_{\mathcal{D}}}$ is going to zero.
\vspace{11 pt}
\section{Proof of compactness}
\noindent In Lemma \ref{lem-fond}, we have already estimated the essential norm of $\Gamma_\mu$ from below. Indeed, let $X$ be a reflexive space. If $\{w_n\}\subset X$ is a unitary weakly null sequence, then
\begin{equation}\label{Essential norm below}
\begin{split}
      \|T\|_{e,X} &=\inf_{K} \|T-K\|_{X} \\
      &\geq \inf_K\lim_{n \to \infty}\|T(w_n)-K(w_n)\|_{X}\\
      &\geq \inf_K\lim_{n \to \infty} \left\vert  \|T(w_n)\|_{X}-\|K(w_n)\|_{X}\right\vert= \lim_{n \to \infty } \|T(w_n)\|_{X} .
\end{split} 
 \end{equation}
 
\noindent For the estimate from above of $\|\Gamma_\mu\|_{e,A^p}$, we use Lemma 3.2 of \cite{Lindstrom2022} (see also \cite{Lindstrm2024C}). For the sake of completeness, we state it here.
\begin{lem}\label{aprox Lemm}
Let $1 < p < \infty$. There
exists a sequence of compact operators $\{L_n\}_n$ such that
$$
\limsup_{n}\|I-L_n\|_{A^p} \leq 1.
$$ Moreover, for every $0 <R< 1$, we have
$$
\limsup_{n\to \infty}\sup_{\|f\|_{A^{p}}=1}\sup_{|z|\leq R}|(I-L_n)(f)(z)| = 0.
$$
\end{lem}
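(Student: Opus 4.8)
\noindent
The plan is to realize the $L_n$ as compositions of a radial smoothing and a truncation to polynomials of bounded degree, and to verify the two assertions separately. A natural first guess is $L_n f=S_{N_n}\bigl(f(r_n\,\cdot\,)\bigr)$ with $r_n\uparrow1$ and $N_n\uparrow\infty$: such $L_n$ have finite rank, hence are compact, so only the quantitative statements need work. A useful fact here is that the dilation $D_rf:=f(r\,\cdot\,)$ is a contraction on $A^p$: in polar coordinates, using that the integral means $M_p(\rho,f):=\bigl(\frac1{2\pi}\int_0^{2\pi}|f(\rho e^{i\theta})|^p\,d\theta\bigr)^{1/p}$ are nondecreasing in $\rho$, one gets $\|D_rf\|_{A^p}^p=\frac{2}{r^2}\int_0^r\rho\,M_p(\rho,f)^p\,d\rho\le\int_0^1 2\rho\,M_p(\rho,f)^p\,d\rho=\|f\|_{A^p}^p$.

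The uniform smallness on compact sets is the routine half and uses only the growth estimates. For $0<R<1$ and $|z|\le R$,
\[
|(I-L_n)f(z)|\le |f(z)-f(r_nz)|+\bigl|(I-S_{N_n})\bigl(f(r_n\,\cdot\,)\bigr)(z)\bigr|;
\]
by \eqref{growthderivative} the first term is $\le(1-r_n)R\sup_{|w|\le R}|f'(w)|\lesssim(1-r_n)(1-R)^{-2/p-1}\|f\|_{A^p}$, and by \eqref{growth} (with the Cauchy estimates) the coefficients obey $|\widehat f(k)|\lesssim(k+1)^{2/p}\|f\|_{A^p}$, so the second term is $\le\sum_{k>N_n}|\widehat f(k)|R^k\lesssim\|f\|_{A^p}\sum_{k>N_n}(k+1)^{2/p}R^k$. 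Since $R<1$, both bounds tend to $0$ uniformly over the unit ball of $A^p$ as $r_n\to1$ and $N_n\to\infty$, for any scheme of this type.

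The delicate point — and the main obstacle — is the sharp norm estimate $\limsup_n\|I-L_n\|_{A^p}\le1$. For $p=2$ it is essentially automatic: there $\|I-L_n\|_{A^2}$ is the supremum over $k$ of the associated Fourier–coefficient multiplier, which equals $1$ once the multiplier lies in $[0,1]$ and tends to $1$ in $n$ for each fixed $k$. For $p\ne2$ the obvious candidates do not suffice: for instance $\|I-P_0\|_{A^p}>1$ when $p\ne2$, where $P_0f=f(0)$, because $f(0)$ is not the $A^p$-best constant approximation of $f$ (equivalently $\int_{\mathbb D}|g|^{p-2}g\,dA\ne0$ for a suitable polynomial $g$ with $g(0)=0$, a short computation); correspondingly $\|I-D_{r_n}\|_{A^p}$ stays bounded away from $1$ as $r_n\to1$, and the Taylor partial sums are not contractions on $A^p$ either. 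One therefore needs a genuinely more careful, necessarily non-rotation-invariant choice of the $L_n$, engineered so that $I-L_n$ is, up to an error of operator norm $o(1)$, a "tail" operator of norm $\le1+o(1)$. This is exactly \cite[Lemma 3.2]{Lindstrom2022} (see also \cite{Lindstrm2024C}), and for the construction and the verification of this norm bound we refer to that paper.
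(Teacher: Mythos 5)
The paper gives no proof of this lemma at all: it is quoted verbatim from \cite[Lemma 3.2]{Lindstrom2022} ``for the sake of completeness,'' so your ultimate deferral of the crucial norm estimate to that same reference coincides with the paper's treatment. Your supplementary material is correct and well-aimed --- the contractivity of dilations, the locally uniform convergence and compactness of the candidate $L_n=S_{N_n}\circ D_{r_n}$, the $A^2$ case via multipliers, and the identification of $\limsup_n\|I-L_n\|_{A^p}\leq 1$ for $p\neq 2$ as the genuine obstruction --- but be aware that since you discard these candidates in favour of the operators constructed in \cite{Lindstrom2022} (and your claim that $\|I-D_{r_n}\|_{A^p}$ stays bounded away from $1$ is itself only asserted, not proved), none of your verifications enters the final argument, which rests entirely on the citation, exactly as in the paper.
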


\noindent 
We are now ready to compute the essential norm of $\|\Gamma_\mu\|_{e,A^p}$ when $1<p<\infty$. 
\begin{proof}[Proof of Theorem \ref{Compact p>1}]
By Lemma \ref{lem-fond} and  \eqref{Essential norm below}, we know that
\begin{equation*}
    \|\Gamma_\mu\|_{e,A^p}\geq \lim_{a \to 2/p}\|\Gamma_\mu(f_a/\|f_a\|_{A^p})\|_{A^p}=\int_{0}^1 \frac{t^{2/p-1}}{(1-t)^{2/p}}\,d\mu(t)\ .
\end{equation*}

 \vspace{11 pt}\noindent 
 To complete the proof of the theorem, we only need to estimate the essential norm of $\Gamma_\mu$ from above. 

\vspace{11 pt}\noindent 
Let $D_{R,t}=\varphi_{t}(\mathbb{D})\cap \overline{D(0,R)}$ and $D_{R,t}^{c}:=\varphi_{t}(\mathbb{D})\setminus D(0,R)$, where $0<R<1$. Then
\begin{align*}
\|\Gamma_{\mu}(f)\|_{A^{p}}&\leq\|\Gamma_\mu(f(0))\|_{A^p}+\|\Gamma_\mu(f-f(0))\|_{A^p}\leq |f(0)|\|\Gamma_\mu(1)\|_{A^p}\\
& + \bigintsss_{0}^{1}\frac{t^{2/p-1}}{(1-t)^{2/p}}\left(\int_{\varphi_{t}(\mathbb{D})}|w|^{p-4}|(f-f(0))(w)|^{p}\, dA(w)\right)^{1/p}\,d\mu(t)\\
&= I+II .
\end{align*}
We name $f(z)-f(0)=zg(z)$. For the second quantity, we have that
\begin{align*}
II&\leq\int_{0}^{1}\frac{t^{2/p-1}}{(1-t)^{2/p}}\biggl[\left(\int_{D_{R,t}}|w|^{p-4}|wg(w)|^{p}\, dA(w)\right)^{1/p}\\
&\qquad \qquad +\left(\int_{D_{R,t}^{c}}|w|^{p-4}|wg(w)|^{p}\, dA(w)\right)^{1/p}\biggr]\,d\mu(t)\\
&\leq\int_{0}^{1}\frac{t^{2/p-1}}{(1-t)^{2/p}}
\biggl[\sup_{|w|\leq R}|g(w)|\left(\int_{\mathbb{D}} |w|^{2p-4}\, dA(w)\right)^{1/p}\\
&\qquad \qquad +\sup_{w \in D_{R,t}^{c}}|w|^{1-4/p}\cdot\|f-f(0)\|_{A^{p}}\biggr]\,d\mu(t)\\
&\leq\int_{0}^{1}\frac{t^{2/p-1}}{(1-t)^{2/p}}
\biggl[ \sup_{|w|\leq R}|g(w)|C_p+\sup_{w \in D_{R,t}^{c}}|w|^{1-4/p}\cdot\|f\|_{A^{p}}\\
&\qquad \qquad +\sup_{w \in D_{R,t}^{c}}|w|^{1-4/p}\cdot|f(0)|\biggr]\,d\mu(t) .
\end{align*}
Let $\{L_n\}_n$ be the sequence of compact operators as described in Lemma \ref{aprox Lemm}.
Then
\begin{equation*}
\begin{split}
\norm{\Gamma_\mu}_{e,A^p} &\leq  \limsup_{n\to\infty} \sup_{\|f\|_{A^p}=1}\norm{(\Gamma_\mu   -  \Gamma_\mu L_n  ) 
(f  )}_{A^p}\\
& = \limsup_{n\to\infty} \sup_{\|f\|_{A^p}=1} \norm{\Gamma_\mu( (I-L_n) (f  )}_{A^p}
\end{split}
\end{equation*}
and the last expression is smaller than
\begin{small}
\begin{equation*}\label{eq:hmmp=2}
\begin{split}
    & \leq \limsup_{n\to\infty} \sup_{\|f\|_{A^p}=1}|(I-L_n) (f)(0)|\left( \|\Gamma_\mu(1)\|_{A^p}+\max\{ 1, R^{1-4/p}  \}   \int_{0}^1 \frac{t^{2/p-1}}{(1-t)^{2/p}}\,d\mu(t) \right)\\
&\quad \qquad + \limsup_{n\to\infty} \sup_{\|f\|_{A^p}=1}\sup_{|{w}|= R}   |{S^*\circ(I-L_n)(f)(w)}|C_p \int_0^1 \frac{t^{2/p-1}}{(1-t)^{2/p}}  \,d\mu(t)   \\
&\quad  \qquad +\limsup_{n\to\infty} \sup_{\|f\|_{A^p}=1} \norm{ ( I-L_n) f  }_{A^p} \max\{ 1, R^{1-4/p}  \}   \int_0^1 \frac{t^{2/p-1}}{(1-t)^{2/p}}   \,d\mu(t) .
\end{split} 
\end{equation*} 
\end{small}
The first term tends to zero because of \eqref{aprox Lemm}. Moreover
\begin{small}
\begin{align*}
    \sup_{|{w}|= R}   |{S^*\circ(I-L_n)(f)(w)}|  &\leq \frac{1}{R}\left( \sup_{|{w}|\leq R}|(I-L_n)(f)(w)|+|(I-L_n)(f)(0)|\right) .
\end{align*}
\end{small}
Therefore, using once more Lemma \ref{aprox Lemm},
\begin{align*}
&\limsup_{n\to\infty} \sup_{\|f\|_{A^p}=1}\max\{ 1, R^{1-4/p}  \}    \sup_{|{w}|\leq R}|(I-L_n)(f)(w)|\\
&\qquad \qquad +\limsup_{n\to\infty} \sup_{\|f\|_{A^p}=1}\max\{ 1, R^{1-4/p}  \}   |(I-L_n)(f)(0)|=0.
\end{align*}
Consequently, by the boundedness of $\Gamma_\mu$ and Theorem \ref{main theom boundedness}, we have
\begin{align*}
    \norm{\Gamma_\mu}_{e} &\leq \limsup_{n\to\infty} \sup_{\|f\|_{A^p}=1} \norm{ ( I-L_n) f  }_{A^p} \max\{ 1, R^{1-4/p}  \}   \int_0^1 \frac{t^{2/p-1}}{(1-t)^{2/p}}   \,d\mu(t),
\end{align*}
and letting $R\to1$, we obtain the desired upper estimate. \\
\end{proof}
\noindent 
Next, we move to the case $p=1$. We need the following preliminary lemma.
\begin{lem}\label{Lem aux}
Let $\{f_n\}\subset A^1$ be a weakly null convergent sequence. Then, for every fixed $0\leq t<1$, $T_t(f_n)$ is strongly null convergent.    
\end{lem}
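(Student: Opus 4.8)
The plan is to exploit the concrete description $T_t(f_n)(z) = w_t(z)\, f_n(\varphi_t(z))$ together with the fact that $\varphi_t(\mathbb{D})$ is a disc whose closure meets $\partial\mathbb D$ only at the point $1$, so that $\varphi_t$ maps $\mathbb D$ into a set that is ``uniformly interior'' except near a single boundary point. First I would recall that a weakly null sequence in $A^1$ is norm-bounded, say $\|f_n\|_{A^1}\le M$, and that weak convergence to $0$ forces $f_n\to 0$ uniformly on compact subsets of $\mathbb D$ (test against the bounded linear functionals given by point evaluations, which are continuous on $A^1$ by \eqref{growth}; in fact the reproducing-type kernels for $A^1$ do not span the dual, but point evaluations are still weak-$*$ continuous functionals, so $f_n(z)\to 0$ for every fixed $z$, and a normal-families/Vitali argument upgrades this to locally uniform convergence using the uniform bound).

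Next I would split the integral defining $\|T_t(f_n)\|_{A^1}$ according to the geometry of $\varphi_t(\mathbb D)$. Fix $t\in[0,1)$. Performing the change of variables $w=\varphi_t(z)$ as in the proof of Lemma \ref{L:estimates T_t} (with the Jacobian factor $|\varphi_t'|^2$ absorbed, using $|\varphi_t(z)|^4 = \tfrac{t^2}{(1-t)^2}|\varphi_t'(z)|^2$ for $t>0$, and handling $t=0$ separately since then $T_0(f)(z)=f(0)/(1-z)$ and the claim is immediate), one gets
\[
\|T_t(f_n)\|_{A^1} = \frac{t}{(1-t)^2}\int_{\varphi_t(\mathbb D)} |w|^{-3}\,|f_n(w)|\,dA(w).
\]
Now given $\varepsilon>0$, choose a compact set $K\subset\mathbb D$ — concretely $K = \varphi_t(\mathbb D)\cap \overline{D(0,R)}$ will not work near $1$, so instead take $K$ to be $\varphi_t(\mathbb D)$ with a small neighborhood of $1$ removed — such that $\int_{\varphi_t(\mathbb D)\setminus K} |w|^{-3}|f_n(w)|\,dA(w)$ is small uniformly in $n$; this is possible because on $\varphi_t(\mathbb D)\setminus K$ one has $|w|$ bounded below (away from $0$) and $\int_{\varphi_t(\mathbb D)}|f_n|\,dA \le \|f_n\|_{A^1}\le M$, so the tail is controlled by the area of the excised neighborhood. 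On the compact part $K$, $f_n\to 0$ uniformly, so that integral tends to $0$.

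The main obstacle is the behavior near the boundary point $1\in\overline{\varphi_t(\mathbb D)}\cap\partial\mathbb D$: there $|f_n(w)|$ can blow up like $(1-|w|)^{-2}$, which is \emph{not} integrable against $dA$, so the naive ``split into compact part plus small tail'' fails unless the geometry of $\varphi_t(\mathbb D)$ near $1$ is used. The key point is that $\varphi_t(\mathbb D)$ is tangent to $\partial\mathbb D$ at $1$ in a controlled way — it is an \emph{internally tangent disc} — so that the portion of $\varphi_t(\mathbb D)$ within distance $\delta$ of $1$ has area comparable to $\delta^2$ while lying within distance comparable to $\delta$ of the boundary; one must check that $\int$ of $(1-|w|)^{-2}$ over this lens-shaped region near $1$ is finite and tends to $0$ as $\delta\to 0$. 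This is exactly the computation that underlies the finiteness of $\|T_t\|_{A^1}$ in Lemma \ref{L:estimates T_t}, so I would either cite that estimate directly (it shows the integral is finite, hence its tail near $1$ is small) or redo the bounded-region estimate: write $\int_{\varphi_t(\mathbb D)\cap B_\delta(1)} |w|^{-3}|f_n(w)|\,dA(w) \le C_t \int_{\varphi_t(\mathbb D)\cap B_\delta(1)} (1-|w|^2)^{-2}\,dA(w)\,\|f_n\|_{A^1}$ by \eqref{growth}, and observe the right-hand integral is finite (again by the $\varphi_t(\mathbb D)$-geometry, as in Lemma \ref{L:estimates T_t}) and hence is less than $\varepsilon$ for $\delta$ small. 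Combining the three pieces — uniformly small tail near $1$, uniformly small contribution near $0$, and uniformly-to-$0$ contribution on the remaining compact set — gives $\|T_t(f_n)\|_{A^1}\to 0$, which is the claim.
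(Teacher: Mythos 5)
There is a genuine gap at the one step that actually matters: the control of the integral near the tangency point $1$. You bound $|f_n(w)|$ by $(1-|w|^2)^{-2}\|f_n\|_{A^1}$ via \eqref{growth} and then assert that $\int_{\varphi_t(\mathbb D)\cap B_\delta(1)}(1-|w|^2)^{-2}\,dA(w)$ is finite. It is not. For the internally tangent disc $\varphi_t(\mathbb D)=D\left(\tfrac{1}{2-t},\tfrac{1-t}{2-t}\right)$, writing $w=(1-\rho)e^{i\theta}$ one checks that the slice of $\varphi_t(\mathbb D)$ at distance $\rho$ from $\mathbb T$ has angular width comparable to $\sqrt{\rho}$, so $\int_{\varphi_t(\mathbb D)}(1-|w|)^{-s}\,dA(w)\sim\int_0^1 \rho^{1/2-s}\,d\rho$, which converges only for $s<3/2$; for $s=2$ it diverges. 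Your appeal to Lemma \ref{L:estimates T_t} is also misplaced: the $p=1$ case of that lemma is proved precisely by \emph{avoiding} this pointwise estimate --- it peels off $f(0)$ and $f'(0)$ and bounds $\int_{\mathbb D}|w|^{-1}|{S^*}^2(f)(w)|\,dA(w)$ by the $A^1$-norm of ${S^*}^2(f)$, never integrating $(1-|w|)^{-2}$ over $\varphi_t(\mathbb D)$. The earlier claim in your second paragraph, that the tail is ``controlled by the area of the excised neighborhood'' because $\int_{\varphi_t(\mathbb D)}|f_n|\,dA\le M$, is the same gap in a different guise: a norm bound alone does not make $\int_E|f_n|\,dA$ small uniformly in $n$ when $|E|$ is small.

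The missing ingredient is the hypothesis you never actually use: \emph{weak} convergence, as opposed to mere norm-boundedness plus locally uniform convergence. By the Dunford--Pettis theorem, a weakly convergent sequence in $L^1$ is uniformly integrable, so $\sup_n\int_E|f_n|\,dA\to0$ as $|E|\to0$; applied to $E=\varphi_t(\mathbb D)\cap B_\delta(1)$, on which $|w|^{-3}$ is bounded for fixed $t$, this gives exactly the uniformly small tail you need, and the rest of your decomposition then works. This is in substance the paper's proof, stated more softly: $T_t(f_n)$ is weakly null (since $T_t$ is bounded on $A^1$) and converges to $0$ in measure (locally uniform convergence plus the fact that $\overline{\varphi_t(\mathbb D)}$ meets $\mathbb T$ only at $1$), and a weakly convergent sequence in $L^1$ that converges in measure converges in norm (the cited result from Dunford--Schwartz). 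Some form of uniform integrability cannot be dispensed with: the normalized functions $f_a/\|f_a\|_{A^1}$ with $f_a(z)=(1-z)^{-a}$, $a\to2$, are norm-one and converge to $0$ locally uniformly, yet by the computation in Lemma \ref{lem-fond} (with $\mu=\delta_t$) their images under $T_t$ do not tend to $0$ in $A^1$ for $t>0$.
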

\begin{proof}
Since $\{f_n\}$ is weakly null, $f_n$ converge to zero on every compact subset of $\mathbb D$. Moreover, $\varphi_t(\mathbb D)\subset \mathbb D$ touches $\partial \mathbb D$ only at $1$. Consequently, for every $0\leq t<1$, $f_n\circ\varphi_t$ converge in measure to zero and, due to \cite[p. 295]{dunford1988}, $\{f_n\circ\varphi_t\}$ is also strongly null. The Lemma is proved since the multiplication by $w_t$ does not change the behaviour of $f_n\circ \varphi_t$.\\  
\end{proof}

\begin{proof}[Proof of Theorem \ref{Compact p=1}]


Let $f_{a}(z)=1/(1-z)^{a}$ as in Lemma \ref{lem-fond}. We consider the bounded sequence 
$$
\left\lbrace\dfrac{f_a}{\|f_a\|_{A^1}}\right\rbrace_{1<a<2}\ ,
$$
which is converging to zero uniformly on compact subsets of $ \mathbb D$. Then, if $\Gamma_\mu$ were compact, \cite[Lemma 3.7]{Tjani2003} would imply that $\Gamma_\mu(f_a/\|f_a\|_{A^1})$ would go to zero. However, in Lemma \eqref{lem-fond}, we have verified that
\[
\lim_{a \to 2}\|\Gamma_\mu(f_a/\|f_a\|_{A_1})\|_{A^1}\geq \int_{0}^1\frac{t}{(1-t)^2}\,d\mu(t).
\]
Consequently, if $\mu \neq \delta_0$, then $\Gamma_\mu(f_a/\|f_a\|_{A^1})$ is not strongly converging to zero and thus $\Gamma_\mu$ cannot be compact. On the other hand, if $\mu=\delta_0$, then $\Gamma_\mu$ is compact since it is a rank $1$ operator.

\vspace{11 pt}
\noindent 
In order to show that $\Gamma_\mu$ is completely continuous, we use Lemma \ref{Lem aux} which states that $T_t$ is completely continuous for every $t\in [0,1)$. Consider now a sequence of functions $\{ f_n \}$ which is weakly null in $A^1$. Then, by the complete continuity of $T_t$ we have that $\lim_n \Vert T_t f_n \Vert_{A^1} = 0$, for all $0\leq t<1 $. Furthermore, using Lemma \ref{L:estimates T_t}, we have
  \begin{equation*}
      \Vert T_t  f_n \Vert_{A^1} \leq \sup_{n} \Vert f_n \Vert_{A^1} \Vert  T_t \Vert_{A^1} \,\leq \,C(1)\, \sup_n \Vert f_n \Vert_{A^1} \frac{1}{(1-t)^2}.
  \end{equation*}
 By applying the dominated convergence theorem together with Theorem \ref{main theom boundedness}, we conclude that 
  \[
  \limsup_n \Vert \Gamma_\mu f_n \Vert_{A^1} \leq \limsup_n \int_0^1 \Vert T_t f_n \Vert_{A^1} \,d\mu(t) = 0.
  \]\\
  \end{proof}
\section{Acknowledgments}
\noindent 
 The authors would like to express their gratitude to professor Aristomenis Siskakis for introducing them to this problem and for all the valuable discussions about this topic.

\vspace{11 pt}
\noindent 
The first author is a member of Gruppo Nazionale per l’Analisi Matematica, la Probabilit\`a e le loro Applicazioni (GNAMPA) of Istituto Nazionale di Alta Matematica (INdAM) and he was partially supported by PID2021-123405NB-I00 by the Ministerio de Ciencia e Innovaci\'on and by the Departament de Recerca i Universitats, grant 2021 SGR 00087.\\
The first, the second and the fifth  authors were partially supported by the Hellenic Foundation for Research and Innovation (H.F.R.I.) under the '2nd Call for H.F.R.I. Research Projects to support Faculty Members \& Researchers' (Project Number: 4662).\\ 
The third and the sixth authors were supported by Engineering and Physical Sciences Research Council grant EP/X024555/1.\\
The fourth author was supported by the Magnus Ehrnrooth Foundation.\\
The sixth author was supported by Engineering and Physical Sciences Research Council grant grant EP/Y008375/1.\\

\section{Declaration}
\noindent
The authors state no conflict of interest. No data-set have been used.

\bibliographystyle{amsplain}
\bibliography{bibliography}

\end{document}